\newcommand{\A}{\mathscr{A}}
\newcommand{\C}{\mathscr{C}}
\newcommand{\add}{\mathrm{add}}
\newcommand{\Ext}{\mathrm{Ext}}
\newcommand{\Hom}{\mathrm{Hom}}
\newcommand{\Ob}{\mathrm{Ob}}
\newcommand{\Twin}{(\Scal,\Tcal),(\Ucal,\Vcal)}
\newcommand{\Hcal}{\mathcal{H}}
\newcommand{\Kcal}{\mathcal{K}}
\newcommand{\Mcal}{\mathcal{M}}
\newcommand{\Ncal}{\mathcal{N}}
\newcommand{\Scal}{\mathcal{S}}
\newcommand{\Tcal}{\mathcal{T}}
\newcommand{\Ucal}{\mathcal{U}}
\newcommand{\Vcal}{\mathcal{V}}
\newcommand{\Wcal}{\mathcal{W}}
\newcommand{\Xcal}{\mathcal{X}}
\newcommand{\Ycal}{\mathcal{Y}}
\newcommand{\Zcal}{\mathcal{Z}}
\newtheorem{thm}{Theorem}[section]
\newtheorem{cor}[thm]{Corollary}
\newtheorem{prop}[thm]{Proposition}
\newtheorem{lem}[thm]{Lemma}
\newtheorem{claim}[thm]{Claim}
\theoremstyle{remark}
\newtheorem{rem}[thm]{Remark}
\theoremstyle{definition}
\newtheorem{dfn}[thm]{Definition}
\newtheorem{ex}[thm]{Example}
\newtheorem{fact}[thm]{Fact}
\numberwithin{equation}{section}
\numberwithin{thm}{section}
\begin{document}

\title[General heart construction for twin torsion pairs]{General heart construction for twin torsion pairs on triangulated categories}

\author{Hiroyuki NAKAOKA}
\address{Department of Mathematics and Computer Science, Kagoshima University, 1-21-35 Korimoto, Kagoshima, 890-0065 Japan}
\email{nakaoka@sci.kagoshima-u.ac.jp}


\thanks{Supported by JSPS Grant-in-Aid for Young Scientists (B) 22740005}

\begin{abstract}
In our previous article, we constructed an abelian category from any torsion pair on a triangulated category. This generalizes the heart of a $t$-structure and the ideal quotient by a cluster tilting subcategory.
Recently, generalizing the quotient by a cluster tilting subcategory, Buan and Marsh showed that an integral preabelian category can be constructed as a quotient, from a rigid object in a triangulated category with some conditions.
In this article, by considering a pair of torsion pairs, we make a simultaneous genralization of these two constructions.
\end{abstract}

\maketitle

\section{Introduction and preliminaries}
For any category $\Kcal$, we write abbreviately $K\in\Kcal$, to indicate that $K$ is an object of $\Kcal$.
For any $K,L\in\Kcal$, let $\Kcal(K,L)$ denote the set of morphisms from $K$ to $L$.
If $\Mcal,\Ncal$ are full subcategories of $\Kcal$, then $\Kcal(\Mcal,\Ncal)=0$ means that $\Kcal(M,N)=0$ for any $M\in\Mcal$ and $N\in\Ncal$. For each $K\in\Ob(\Kcal)$, similarly $\Kcal(\Mcal,K)=0$ means that $\Kcal(M,K)=0$ for any $M\in\Mcal$. We denote the full subcategory of $\Kcal$ consisting of those $K\in\Kcal$ satisfying $\Kcal(\Mcal,K)=0$ by $\Mcal^{\perp}$. Dually, $\Kcal(K,\Ncal)=0$ means $\Kcal(K,N)=0$ for any $N\in\Ncal$, and these form a full subcategory ${}^{\perp}\!\Ncal\subseteq\Kcal$.
If $\Kcal$ is additive and $\Ncal\subseteq\Kcal$ is a full additive subcategory, then $\Kcal/\Ncal$ is defined to be the ideal quotient of $\Kcal$ by $\Ncal$. Namely, $\Kcal/\Ncal$ is an additive category defined by\\ 
\ -\ \  $\Ob(\Kcal/\Ncal)=\Ob(\Kcal)$,\\
\ -\ \  For any $X,Y\in\Kcal$,
\[ (\Kcal/\Ncal)(X,Y)=\Kcal(X,Y)/\{ f\in\Kcal(X,Y) \mid f\ \text{factors through some}\ N\in\Ncal\}. \]

Throughout this article, we fix a triangulated category $\C$. Any subcategory of $\C$ is a full, additive subcategory closed under isomorphisms and direct summands. For any object $T\in\C$, $\add (T)$ denotes the full subcategory of $\C$ consisting of direct summands of finite direct sums of $T$.
When $\Mcal,\Ncal$ are subcategories of $\C$ and $C\in\C$, then the abbreviations $\Ext^1(\Mcal,\Ncal)=0$ and $\Ext^1(\Mcal,C)=0$ and $\Ext^1(C,\Ncal)=0$ are defined similarly as above.
For any pair of subcategories $\Mcal,\Ncal\subseteq\C$, we define $\Mcal\ast\Ncal\subseteq\C$ as the full subcategory consisting of those $C\in\C$ admitting some distinguished triangle
\[ M\rightarrow C\rightarrow N\rightarrow M[1] \]
with $M\in\Mcal$ and $N\in\Ncal$.

By definition \cite{IY}, a torsion pair $(\Xcal,\Ycal)$ on $\C$ is a pair of (full additive) subcategories $\Xcal,\Ycal\subseteq\C$ satisfying
\begin{itemize}
\item[{\rm (i)}] $\C(\Xcal,\Ycal)=0$,
\item[{\rm (ii)}] $\C=\Xcal\ast\Ycal$.
\end{itemize}
Previously in \cite{N}, we showed that if we are given a torsion pair $(\Xcal,\Ycal)$ on $\C$, then
\begin{equation}\label{GeneralHeart}
(\, (\Xcal\ast\Wcal)\cap (\Wcal\ast\Ycal[1])\, )/\Wcal
\end{equation}
becomes an abelian category, where $\Wcal=(\Xcal[1]\cap\Ycal)$.
This generalizes the following two constructions.
\begin{enumerate}
\item {\bf The heart of a $t$-structure}. A $t$-structure is nothing other than a torsion pair $(\Xcal,\Ycal)$ satisfying $\Xcal[1]\subseteq\Xcal$. In this case, $(\ref{GeneralHeart})$ agrees with the heart \cite{BBD}. 
\item {\bf Ideal quotient by a cluster tilting subcategory}. A full additive subcategory $\Tcal\subseteq\C$ is a cluster tilting subcategory if and only if $(\Tcal[-1],\Tcal)$ is a torsion pair on $\C$. In this case, $(\ref{GeneralHeart})$ agrees with the ideal quotient of $\C$ by $\Tcal$, which was shown to become abelian in \cite{KZ} (originally in \cite{BMR}, or \cite{KR} in $2$-$\mathrm{CY}$ case). 
\end{enumerate}

Recently, generalizing the second case of a cluster tilting subcategory, Buan and Marsh showed that if $T$ is a rigid (i.e. $\Ext^1(T,T)=0$) object in a $\Hom$-finite Krull-Schmidt triangulated category $\C$  (over a field $k$) with a Serre functor, then
\begin{equation}\label{BMcat}
\qquad\qquad\qquad\C/\Xcal_T\qquad (\text{where}\ \Xcal_T=(\add (T))^{\perp})
\end{equation}
becomes an integral preabelian category. 
(In the notation of \cite{BM}, $\Xcal_T$ is written as $\Xcal_T=(\add (T))^{\perp}[1]$. This is only due to the difference in the definition of $\Mcal^{\perp}$. In \cite{BM}, for any $\Mcal\subseteq\C$, $\Mcal^{\perp}$ is defined to be the full subcategory of $\C$ consisting of those $C\in\C$ satisfying $\Ext^1(\Mcal,C)$=0. )

As in \cite{R} (and as quoted in \cite{BM}), an additive category $\A$ is {\it preabelian} if any morphism in $\A$ has a kernel and a cokernel. A preabelian category is {\it left semi-abelian} if and only if for any pullback diagram
\begin{equation}
\label{PullBackDiag}
\xy
(-6,6)*+{A}="0";
(6,6)*+{B}="2";
(-6,-6)*+{C}="4";
(6,-6)*+{D}="6";
(0,0)*+{\square}="16";
{\ar^{\alpha} "0";"2"};
{\ar_{\beta} "0";"4"};
{\ar^{\gamma} "2";"6"};
{\ar_{\delta} "4";"6"};
\endxy
\end{equation}
in $\A$, \lq\lq $\delta$ is a cokernel morphism" implies \lq\lq $\alpha$ is epimorphic" \cite{R}. A {\it right semi-abelian} category is characterized dually, using pushout diagrams. A {\it semi-abelian} category is defined to be a preabelian category which is both left semi-abelian and right semi-abelian.

A preabelian category $\A$ is {\it left integral} if for any pullback diagram $(\ref{PullBackDiag})$, \lq\lq $\delta$ is epimorphic" implies \lq\lq $\alpha$ is epimorphic". A {\it right integral} category is defined dually, using pushout diagrams. An {\it integral} category is defined to to be a preabelian category which is both left integral and right integral. Thus a preabelian category $\A$ is semi-abelian whenever it is integral.
Moreover if $\A$ is integral, then the localization of $\A$ by regular ($=$ epimorphic and monomorphic) morphisms are shown to become abelian. In \cite{BM}, using this fact, Buan and Marsh made an abelian category out of their integral preabelian category $\C/\Xcal_T$.

In this article, we generalize simultaneously Buan and Marsh's $\C/\Xcal_T$ and our $\underline{\Hcal}$, using a pair of torsion pairs. Starting from torsion pairs, we need no assumption on $\C$, except it is triangulated.

\section{Definition and basic properties}

As before, $\C$ is a fixed triangulated category. Any subcategory of $\C$ is assumed to be full, additive, closed under isomorphisms and direct summands.

\begin{dfn}\label{DefDivide}
Let $\Ucal$ and $\Vcal$ be full additive subcategories of $\C$.
We call $(\Ucal,\Vcal)$ a {\it cotorsion pair} if it satisfies
\begin{itemize}
\item[{\rm (i)}] $\Ext^1(\Ucal,\Vcal)=0$,
\item[{\rm (ii)}] $\C=\Ucal\ast\Vcal[1]$.
\end{itemize}
\end{dfn}

\begin{rem} $(\Ucal,\Vcal)$ is a cotorsion pair if and only if $(\Ucal[-1],\Vcal)$ is a {\it torsion pair} in \cite{IY}. (Unlike \cite{BR}, it does not require the closedness under shifts.)
In this sense, a cotorsion pair is essentially the same as a torsion pair. However we prefer cotorsion pairs, for the sake of duality in shifts.
\end{rem}

\begin{rem} For any cotorsion pair $(\Ucal,\Vcal)$ on $\C$, we have $\Ucal={}^{\perp}\!(\Vcal[1])$ and $\Vcal=(\Ucal[-1])^{\perp}$.
\end{rem}

Cotorsion pairs generalize $t$-structures and cluster tilting subcategories, as follows.
\begin{ex}(cf. Definition 2.6 in \cite{ZZ}, Proposition 2.6 in \cite{N}) \label{ExCotors}
$\ \ $
\begin{enumerate}
\item A $t$-{\it structure} is a pair of subcategories $(\Xcal,\Ycal)$ where $(\Ucal,\Vcal)=(\Xcal[1],\Ycal)$ is a cotorsion pair satisfying $\Ucal[1]\subseteq\Ucal$. This is also equivalent to $\Vcal[-1]\subseteq\Vcal$.
\item A {\it co}-$t$-{\it structure} is a pair of subcategories $(\Xcal,\Ycal)$ where $(\Ucal,\Vcal)=(\Xcal[1],\Ycal)$ is a cotorsion pair satisfying $\Ucal[-1]\subseteq\Ucal$. This is also equivalent to $\Vcal[1]\subseteq\Vcal$.
\item A cotorsion pair $(\Ucal,\Vcal)$ is called {\it rigid} if $\Ext^1(\Ucal,\Ucal)=0$. This is also equivalent to $\Ucal\subseteq\Vcal$.
\item A subcategory $\Tcal\subseteq\C$ is a {\it cluster tilting} subcategory if and only if $(\Tcal,\Tcal)$ is a cotorsion pair.
\end{enumerate}
\end{ex}
\begin{rem}
Using a result in \cite{AN}, we can characterize a co-$t$-structure by the vanishing of an abelian category $\underline{\Hcal}$ defined as below. In fact, a cotorsion pair $(\Ucal,\Vcal)$ becomes a co-$t$-structure if and only if it satisfies $\underline{\Hcal}=0$.
\end{rem}

In \cite{N}, we showed the following.
\begin{thm}\label{ThmGHC1}$($Theorem 6.4 in \cite{N}$)$
Let $(\Ucal, \Vcal)$ be a cotorsion pair on $\C$. If we define full subcategories of $\C$ by
\[ \Wcal=\Ucal\cap\Vcal,\ \ \C^-=\Ucal[-1]\ast\Wcal,\ \ \C^+=\Wcal\ast\Vcal[1],\ \ \Hcal=\C^+\cap\C^-, \]
then the ideal quotient $\Hcal/\Wcal$ becomes abelian.
\end{thm}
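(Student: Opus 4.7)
The plan is to mimic the construction of the heart of a $t$-structure, using the cotorsion pair $(\Ucal,\Vcal)$ as a substitute for the truncation functors. First I verify that $\Wcal\subseteq\Hcal$, which is immediate from the trivial decompositions $0\to W\to W\to 0$ in both $\Ucal[-1]\ast\Wcal$ and $\Wcal\ast\Vcal[1]$, and that $\Hcal$ is closed under finite direct sums and summands (as an intersection of subcategories with these properties). Hence $\Hcal/\Wcal$ is a well-defined additive category with zero object. I would then record the orthogonality $\C(\Ucal[-1],\Wcal)=0=\C(\Wcal,\Vcal[1])$, which follows from $\Ext^1(\Ucal,\Vcal)=0$ together with $\Wcal\subseteq\Ucal\cap\Vcal$. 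This guarantees that the $\Ucal[-1]\ast\Wcal$- and $\Wcal\ast\Vcal[1]$-decompositions of any object are essentially unique, up to morphisms that factor through $\Wcal$.

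Given a morphism $f:H\to H'$ in $\Hcal$, I would construct its cokernel in $\Hcal/\Wcal$ as follows. Embed $f$ in a distinguished triangle $H\xrightarrow{f}H'\xrightarrow{g}C\xrightarrow{h}H[1]$, and decompose $C$ via the cotorsion pair axiom $\C=\Ucal\ast\Vcal[1]$ as $U\to C\to V[1]\to U[1]$ with $U\in\Ucal$ and $V\in\Vcal$. Applying the octahedral axiom to the composite $H'\to C\to V[1]$ produces an object $H''$ fitting into triangles $H'\to H''\to U[1]\to H'[1]$ and $H\to H''\to V[1]\to H[1]$. The first triangle, combined with the decomposition of $H'$ from $\C^+=\Wcal\ast\Vcal[1]$, shows $H''\in\C^+$; the second, combined with the decomposition of $H$ from $\C^-=\Ucal[-1]\ast\Wcal$, shows $H''\in\C^-$; so $H''\in\Hcal$. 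The induced morphism $H'\to H''$ is the desired cokernel representative. Kernels are produced dually by rotating the cone triangle one step and decomposing via the same cotorsion pair axiom.

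The main obstacle is the remaining abelian axiom: that the canonical comparison between coimage and image of every morphism is invertible in $\Hcal/\Wcal$. If $\bar k:K\to H$ is the kernel of $\bar f$ and $\bar q:H'\to Q$ is the cokernel of $\bar f$ constructed above, then taking the cokernel of $\bar k$ and the kernel of $\bar q$ produces objects $\overline{I}$ and $\widetilde{I}$ together with a comparison morphism $\overline{I}\to \widetilde{I}$ in $\Hcal/\Wcal$. My plan is to realize both $\overline{I}$ and $\widetilde{I}$ from the same octahedral tower built over the cone triangle of $f$ and its cotorsion-pair decomposition; the orthogonality relations above will then force the two constructions to differ only by morphisms factoring through $\Wcal$, so the comparison becomes invertible in the quotient. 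This intricate diagram chase through several octahedra, which genuinely invokes both halves of the definition of $\Hcal$, is where the main technical effort sits; once the bookkeeping of triangles is set up, the identification is forced by the defining $\Hom$-orthogonalities of the cotorsion pair.
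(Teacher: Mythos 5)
Your cokernel construction is where the argument breaks down, in three ways. First, the octahedron over the composite $H'\overset{g}{\longrightarrow}C\overset{v}{\longrightarrow}V[1]$ does not yield an object with the two triangles you list: since $\mathrm{cone}(g)=H[1]$ and $\mathrm{cone}(v)=U[1]$, the new object $N=\mathrm{cone}(v\circ g)$ satisfies $H\to N[-1]\to U\to H[1]$ and $N[-1]\to H'\to V[1]\to N$; the induced map goes \emph{into} $H'$, so what this octahedron produces is an image/kernel-type object, not a cokernel. Second, even granting an $H''$ with your two triangles, the map $H'\to H''$ is not the cokernel: specialize to a $t$-structure (so $\Wcal=0$) and take $f$ monomorphic in the heart with cokernel $Q\neq 0$; then $\mathrm{cone}(f)\cong Q$, $U=0$, $V[1]\cong Q$, your first triangle forces $H'\to H''$ to be an isomorphism, whereas the cokernel is the proper quotient $H'\to Q$. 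Third, the membership claims are unjustified: a triangle $U\to H'\to H''\to U[1]$ with $U\in\Ucal$ and $H'\in\C^+$ does not place $H''$ in $\C^+$ (the closure statements, Lemma \ref{LemAB} and Lemma \ref{LemABDual}, require the outer term to lie in $\Vcal$, resp.\ $\Ucal$, in the correct position, which is not the case here), and similarly for $\C^-$. The construction that works, and which this paper generalizes in Definitions \ref{DefMf} and \ref{DefZC}, starts not from the cone of $f$ but from the $\C^-$-decomposition $U_A[-1]\to A\to W_A$ of the \emph{source}: one forms $M_f=\mathrm{cone}\bigl(U_A[-1]\to A\overset{f}{\longrightarrow}B\bigr)$, proves the universal property of $\underline{m}_f$ against $\C^+$ (Proposition \ref{PropMf}), and then reflects $M_f$ into $\C^+$ via $Z_{M_f}$. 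The copy of $W_A$ glued onto the cone is not optional; $\mathrm{cok}(\underline{f})$ is not recoverable from $\mathrm{cone}(f)$ and its $(\Ucal,\Vcal[1])$-decomposition alone.

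The second, independent gap is the last step. Proving that the coimage-to-image comparison is invertible is exactly what separates ``abelian'' from ``preabelian,'' and you defer it entirely to a plan (``the orthogonality relations will then force\dots''). No argument is given, and none can come from generalities: the identical recipe applied to a genuine twin cotorsion pair produces a category $\underline{\Hcal}$ that is only semi-abelian (or integral), not abelian --- this is the whole point of the Buan--Marsh example --- so any proof of $\mathrm{coim}=\mathrm{im}$ must use $\Scal=\Ucal$, i.e.\ that both decompositions come from the \emph{same} cotorsion pair, in an essential way, and your sketch never identifies where that enters. Note also that the paper does not reprove this statement; it quotes Theorem 6.4 of \cite{N}, where this final comparison occupies the bulk of the proof.
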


In this article, we generalize this to the case of pairs of cotorsion pairs.
We work on a pair of cotorsion pairs $(\Scal,\Tcal),(\Ucal,\Vcal)$ on $\C$ satisfying $\Ext^1(\Scal,\Vcal)=0$.
Since a \lq\lq pair of pairs" is a bit confusing, we use the following terminology.

\begin{dfn}\label{DefTwin}
A pair of cotorsion pairs $(\Scal,\Tcal),(\Ucal,\Vcal)$ on $\C$ is called a {\it twin cotorsion pair} if it satisfies
\begin{equation}\label{Twin}
\Ext^1(\Scal,\Vcal)=0.
\end{equation}
Remark that this condition is equal to $\Scal\subseteq\Ucal$, and also to $\Tcal\supseteq\Vcal$.
\end{dfn}

\begin{dfn}
Let $(\Scal,\Tcal),(\Ucal,\Vcal)$ be a twin cotorsion pair on $\C$. We define subcategories of $\C$ by
\[ \Wcal=\Tcal\cap\Ucal,\ \ \C^-=\Scal[-1]\ast\Wcal,\ \  \C^+=\Wcal\ast\Vcal[1],\ \ \Hcal=\C^+\cap\C^-. \]
Each of $\C,\C^+,\C^-,\Hcal$ contains $\Wcal$ as a subcategory. We denote their ideal quotients by $\Wcal$ by $\underline{\C}, \underline{\C}^+,\underline{\C}^-,\underline{\Hcal}$. Thus we obtain a sequence of full additive subcategories
\[ \underline{\Hcal}\subseteq\underline{\C}^+\subseteq\underline{\C},\quad \underline{\Hcal}\subseteq\underline{\C}^-\subseteq\underline{\C}. \]
For any morphism $f\in\C(A,B)$, we denote its image in $\underline{\C}(A,B)$ by $\underline{f}$.
\end{dfn}

\begin{rem}\label{TrivW}
Since $\Wcal$ is closed under direct summands, for any $C\in\C$ we have
\[ C= 0\ \ \text{in}\ \  \underline{\C}\ \ \Longleftrightarrow\ \ C\in\Wcal. \]
\end{rem}

\begin{ex}\label{ExTwin}
$\ \ $
\begin{enumerate}
\item A single cotorsion pair can be regarded as a degenerated case of a twin cotorsion pair. A twin cotorsion pair $\Twin$ is a single cotorsion pair (namely, $(\Scal,\Tcal)=(\Ucal,\Vcal)$) if and only if $\Scal=\Ucal$ if and only if $\Tcal=\Vcal$. In this case, $($since $\Wcal,\C^+,\C^-$ and $\Hcal$ agrees with those in Theorem \ref{ThmGHC1},$)$ $\underline{\Hcal}$ becomes abelian as in Theorem \ref{ThmGHC1}. 
\item Another extremal case is when $\Tcal=\Ucal$. Remark that in this case, $\Scal\subseteq\Tcal$ and $\Ucal\supseteq\Vcal$ holds. In particular, $(\Scal,\Tcal)$ is rigid.

As shown in \cite{BM}, if $T$ is a rigid object in a $\Hom$-finite Krull-Schmidt triangulated category $\C$ (over a field $k$) with a Serre functor, then $(\Scal,\Tcal)=(\add(T)[1],\Xcal_T)$ and $(\Ucal,\Vcal)=(\Xcal_T,(\Xcal_T)^{\perp}[-1])$ are cotorsion pairs (Lemma 1.2 in \cite{BM}). Since $T$ is rigid, this pair satisfies $\add (T)[1]\subseteq \Xcal_T$. Thus $\Ext^1(\add (T)[1],(\Xcal_T)^{\perp}[-1])=0$, and $(\add(T)[1],\Xcal_T),(\Xcal_T,(\Xcal_T)^{\perp}[-1])$ becomes a twin cotorsion pair.

In this case, we have $\underline{\Hcal}=\C/\Xcal_T$, and it was shown in \cite{BM} that this category becomes integral preabelian.
(Remark that when $\Tcal=\Ucal$, generally we have $\Wcal=\Tcal=\Ucal$ and $\C^+=\C^-=\C$.)
\end{enumerate}
\end{ex}

\begin{rem}\label{RemTTF}
A similar situation to {\rm (2)} in Example \ref{ExTwin} appears in \cite{BR} as a $\mathrm{TTF}$-triple. A $\mathit{TTF}$-{\it triple} on $\C$ is a triplet $(\Xcal,\Ycal,\Zcal)$ of subcategories of $\C$, in which both $(\Xcal,\Ycal)$ and $(\Ycal,\Zcal)$ are $t$-structures.

\begin{lem}\label{LemAB}
$\ \ $
\begin{enumerate}
\item If $\ U[-1]\rightarrow A\overset{f}{\longrightarrow}B\rightarrow U$ is a distinguished triangle in $\C$ satisfying $U\in\Ucal$, then $A\in\C^-$ implies $B\in\C^-$.
\item If $\ S[-1]\rightarrow A\overset{f}{\longrightarrow}B\rightarrow S$ is a distinguished triangle in $\C$ satisfying $S\in\Scal$, then $B\in\C^-$ implies $A\in\C^-$.
\end{enumerate}
\end{lem}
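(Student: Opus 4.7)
The plan is to prove (2) first (it is more direct) and then bootstrap (1) from it.

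For (2), I would take the $\C^-$-decomposition of $B$, a triangle $B \to W_B \to S_B \to B[1]$ with $W_B \in \Wcal$ and $S_B \in \Scal$, and apply the octahedral axiom to the composition $A \to B \to W_B$. The cone $X$ of the composite $A \to W_B$ then fits into a triangle $S \to X \to S_B \to S[1]$ whose outer terms both lie in $\Scal$. Because $\Scal={}^{\perp}(\Tcal[1])$ is closed under extensions in the cotorsion pair $(\Scal,\Tcal)$, this forces $X\in\Scal$, and the remaining octahedral triangle $A \to W_B \to X \to A[1]$ exhibits $A\in\Scal[-1]\ast\Wcal=\C^-$.

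For (1), I would start from the $\C^-$-decomposition $A \to W_A \to S_A \to A[1]$ and apply the octahedral axiom to the composition $S_A[-1] \to A \to B$. This produces an auxiliary object $Y$ sitting in two triangles: $S_A[-1] \to B \to Y \to S_A$ and $W_A \to Y \to U \to W_A[1]$. Since $\Ucal={}^{\perp}(\Vcal[1])$ is closed under extensions and $W_A, U \in \Ucal$, the second triangle gives $Y\in\Ucal$.

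The crux is to upgrade this to $Y\in\C^-$. For that, I would use the shifted presentation $\C=\Scal[-1]\ast\Tcal$ of the cotorsion pair $(\Scal,\Tcal)$ to write a triangle $S_Y[-1] \to Y \to T_Y \to S_Y$ with $S_Y\in\Scal$ and $T_Y\in\Tcal$. Applying $\C(-,V[1])$ for $V\in\Vcal$ produces the exact fragment
\[ \Ext^1(S_Y,V) \to \Ext^1(T_Y,V) \to \Ext^1(Y,V), \]
whose left term vanishes by the twin condition $\Ext^1(\Scal,\Vcal)=0$ and whose right term vanishes because $Y\in\Ucal$. Hence $T_Y\in\Ucal\cap\Tcal=\Wcal$, and so $Y\in\Scal[-1]\ast\Wcal=\C^-$. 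Applying (2) to the triangle $S_A[-1] \to B \to Y \to S_A$ (with $S_A\in\Scal$ and $Y\in\C^-$) then concludes $B\in\C^-$.

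The main obstacle is precisely this upgrade $Y\in\Ucal\Rightarrow Y\in\C^-$, and it is the only place where the twin condition is used in an essential way, via the alternative form $\C=\Scal[-1]\ast\Tcal$ of the second cotorsion pair. Everything else reduces to routine octahedron-plus-extension-closure manipulations in the cotorsion pair setup.
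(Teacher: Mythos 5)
Your argument is correct. Part (2) is essentially the paper's own proof: the same octahedron built on the composite $A\rightarrow B\rightarrow W_B$, with extension-closure of $\Scal={}^{\perp}(\Tcal[1])$ forcing the third cone into $\Scal$, and the remaining triangle exhibiting $A\in\Scal[-1]\ast\Wcal$. Part (1) is where you genuinely diverge. The paper argues directly: it takes the decomposition $S_B[-1]\rightarrow B\rightarrow T_B\rightarrow S_B$ and shows $T_B\in\Ucal$ (hence $T_B\in\Tcal\cap\Ucal=\Wcal$) by testing against an arbitrary $v\in\C(T_B,V^{\dag}[1])$ and killing it in three steps, using $\Ext^1(W_A,V^{\dag})=0$, then $\Ext^1(U,V^{\dag})=0$, then $\Ext^1(S_B,V^{\dag})=0$. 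You instead form the cone $Y$ of $f\circ s_A$, get $Y\in\Ucal$ from the octahedral triangle $W_A\rightarrow Y\rightarrow U\rightarrow W_A[1]$ and extension-closure of $\Ucal={}^{\perp}(\Vcal[1])$, upgrade this to $Y\in\C^-$ via the inclusion $\Ucal\subseteq\C^-$ (your exact-sequence computation $\Ext^1(S_Y,\Vcal)=0$ and $\Ext^1(Y,\Vcal)=0$ forcing $T_Y\in\Ucal\cap\Tcal=\Wcal$ is valid, and correctly identifies the twin condition as the essential input), and then apply part (2) to the triangle $S_A[-1]\rightarrow B\rightarrow Y\rightarrow S_A$. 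Both routes work; yours costs an extra octahedron but isolates the reusable fact $\Ucal\subseteq\C^-$ (dually $\Tcal\subseteq\C^+$) and derives (1) formally from (2), whereas the paper's is a shorter self-contained diagram chase that never needs that inclusion.
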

\begin{proof}
{\rm (1)} Take distinguished triangles
\begin{eqnarray*}
S_A[-1]\overset{s_A}{\longrightarrow}A\overset{w_A}{\longrightarrow}W_A\rightarrow S_A&\ &(S_A\in\Scal, W_A\in\Wcal),\\
S_B[-1]\overset{s_B}{\longrightarrow}B\overset{t_B}{\longrightarrow}T_B\rightarrow S_B&\ &(S_B\in\Scal, T_B\in\Tcal).
\end{eqnarray*}
Since $\Ext^1(S_A,T_B)=0$, $f$ induces a morphism of triangles
\[
\xy
(-22,6)*+{S_A[-1]}="0";
(-7,6)*+{A}="2";
(6,6)*+{W_A}="4";
(18,6)*+{S_A}="6";
(-22,-6)*+{S_B[-1]}="10";
(-7,-6)*+{B}="12";
(6,-6)*+{T_B}="14";
(18,-6)*+{S_B.}="16";
{\ar^{s_A} "0";"2"};
{\ar^{w_A} "2";"4"};
{\ar^{} "4";"6"};
{\ar^{} "0";"10"};
{\ar^{f} "2";"12"};
{\ar^{} "4";"14"};
{\ar^{} "6";"16"};
{\ar_{s_B} "10";"12"};
{\ar_{t_B} "12";"14"};
{\ar_{} "14";"16"};
{\ar@{}|\circlearrowright "0";"12"};
{\ar@{}|\circlearrowright "2";"14"};
{\ar@{}|\circlearrowright "4";"16"};
\endxy
\]
It suffices to show $T_B\in\Ucal$.

Take any $V^{\dag}\in\Vcal$, and any $v\in\C(T_B,V^{\dag}[1])$. Since $\Ext^1(W_A,V^{\dag})=0$, we have $v\circ t_B\circ f=0$.
Applying this to the given distinguished triangle, we see that $v\circ t_B$ factors through $U$,
\[
\xy
(-18,6)*+{U[-1]}="0";
(-6,6)*+{A}="2";
(6,6)*+{B}="4";
(14,-8)*+{}="5";
(18,6)*+{U}="6";
(6,-6)*+{T_B}="8";
(6,-18)*+{V^{\dag}[1]}="10";
{\ar^{} "0";"2"};
{\ar^{f} "2";"4"};
{\ar^{} "4";"6"};
{\ar_{t_B} "4";"8"};
{\ar_{} "6";"10"};
{\ar_{v} "8";"10"};
{\ar@{}|\circlearrowright "4";"5"};
\endxy
\]
and $v\circ t_B=0$ follows from $\Ext^1(U,V^{\dag})=0$.
Thus $v$ factors through $S_B$,
\[
\xy
(-23,6)*+{S_B[-1]}="0";
(-7,6)*+{B}="2";
(6,6)*+{T_B}="4";
(15,-3)*+{}="5";
(19,6)*+{S_B}="6";
(6,-8)*+{V^{\dag}[1]}="8";
{\ar^{} "0";"2"};
{\ar^{t_B} "2";"4"};
{\ar^{} "4";"6"};
{\ar_{v} "4";"8"};
{\ar_{} "6";"8"};
{\ar@{}|\circlearrowright "4";"5"};
\endxy
\]
which means $v=0$, since $\Ext^1(S_B,V^{\dag})=0$.

{\rm (2)}
Take distinguished triangles
\begin{eqnarray*}
&S_B[-1]\overset{s_B}{\longrightarrow}B\overset{w_B}{\longrightarrow}W_B\rightarrow S_B\quad(S_B\in\Scal, W_B\in\Wcal),&\\
&X\rightarrow A\overset{w_B\circ f}{\longrightarrow}W_B\rightarrow X[1].&
\end{eqnarray*}
Then by the octahedral axiom,
\[ S[-1]\rightarrow X\rightarrow S_B[-1]\rightarrow S \]
is also a distinguished triangle. This implies $X\in\Scal[-1]$, and thus $A\in\C^-$.
\[
\xy
(-20,16)*+{S[-1]}="0";
(0.5,15)*+{X}="2";
(17,14)*+{S_B[-1]}="4";
(-2,4)*+{A}="6";
(6.1,-0.6)*+{B}="8";
(-5.8,-14.2)*+{W_B}="10";
(7.5,8.5)*+_{_{\circlearrowright}}="12";
(-5.5,11.5)*+_{_{\circlearrowright}}="14";
(-0.5,-4.3)*+_{_{\circlearrowright}}="14";
{\ar^{} "0";"2"};
{\ar^{} "2";"4"};
{\ar_{} "0";"6"};
{\ar^{} "2";"6"};
{\ar^{f} "6";"8"};
{\ar_{} "6";"10"};
{\ar^{} "4";"8"};
{\ar^{w_B} "8";"10"};
\endxy
\]
\end{proof}

Dually, the following holds.
\begin{lem}\label{LemABDual}
$\ \ $
\begin{enumerate}
\item If $\ T\rightarrow A\overset{f}{\longrightarrow}B\rightarrow T[1]$ is a distinguished triangle in $\C$ satisfying $T\in\Tcal$, then $B\in\C^+$ implies $A\in\C^+$.
\item If $\ V\rightarrow A\overset{f}{\longrightarrow}B\rightarrow V[1]$ is a distinguished triangle in $\C$ satisfying $V\in\Vcal$, then $A\in\C^+$ implies $B\in\C^+$.
\end{enumerate}
\end{lem}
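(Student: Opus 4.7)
My plan is to mirror the proof of Lemma \ref{LemAB} under the formal duality that swaps the roles of $\Scal$ with $\Vcal$ and of $\Tcal$ with $\Ucal$ (so that $\Wcal=\Tcal\cap\Ucal$, the twin condition $\Ext^1(\Scal,\Vcal)=0$, and the class of triangles are all preserved), exchanges $\C^+$ with $\C^-$, and reverses shifts.

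For part (1), given a triangle $T\to A\overset{f}{\longrightarrow}B\to T[1]$ with $T\in\Tcal$ and $B\in\C^+$, I would take the $\C^+$-decomposition $W_B\overset{g_B}{\longrightarrow}B\to V_B[1]\to W_B[1]$ and the general decomposition $U_A\overset{g_A}{\longrightarrow}A\to V_A[1]\to U_A[1]$ supplied by the cotorsion pair $(\Ucal,\Vcal)$. The vanishing $\Ext^1(U_A,V_B)=0$ forces the composite $U_A\to A\to B\to V_B[1]$ to be zero, so $f\circ g_A$ factors as $g_B\circ\phi$ for some $\phi\colon U_A\to W_B$, giving a morphism of triangles. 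To conclude $A\in\C^+$ it then suffices to show $U_A\in\Tcal$: combined with $U_A\in\Ucal$ this places $U_A\in\Wcal$, and the triangle for $A$ then witnesses $A\in\Wcal\ast\Vcal[1]$.

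To show $U_A\in\Tcal$, I verify $\C(S^{\dag},U_A[1])=0$ for every $S^{\dag}\in\Scal$. Given such a $v$, apply $\C(S^{\dag},-)$ to the three triangles above. The twin vanishing $\Ext^1(S^{\dag},V_A)=0$ gives an injection $\C(S^{\dag},U_A[1])\hookrightarrow\C(S^{\dag},A[1])$ via $g_A[1]_*$; the cotorsion vanishing $\Ext^1(S^{\dag},T)=0$ (from $T\in\Tcal$) gives an injection $\C(S^{\dag},A[1])\hookrightarrow\C(S^{\dag},B[1])$ via $f[1]_*$; and the twin vanishing $\Ext^1(S^{\dag},V_B)=0$ gives an injection $\C(S^{\dag},W_B[1])\hookrightarrow\C(S^{\dag},B[1])$ via $g_B[1]_*$. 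Since $f[1]\circ g_A[1]=g_B[1]\circ\phi[1]$ by the morphism of triangles, chasing $v$ through these injections reduces its vanishing to that of $\phi[1]\circ v\in\Ext^1(S^{\dag},W_B)$, which holds because $W_B\in\Wcal\subseteq\Tcal$.

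For part (2), I would repeat the octahedral argument of Lemma \ref{LemAB}(2). Given $V\to A\overset{f}{\longrightarrow}B\to V[1]$ with $V\in\Vcal$ and the $\C^+$-decomposition $W_A\overset{g_A}{\longrightarrow}A\to V_A[1]\to W_A[1]$ of $A$, I complete the composite $f\circ g_A\colon W_A\to B$ to a triangle $W_A\to B\to Y\to W_A[1]$ and apply the octahedral axiom to the composable pair $g_A,f$. This produces a distinguished triangle $V_A[1]\to Y\to V[1]\to V_A[2]$, which after shifting by $-1$ reads $V_A\to Y[-1]\to V\to V_A[1]$. Since $\Vcal=(\Ucal[-1])^{\perp}$ is closed under extensions, $Y[-1]\in\Vcal$, so $Y\in\Vcal[1]$, and the triangle $W_A\to B\to Y\to W_A[1]$ exhibits $B\in\Wcal\ast\Vcal[1]=\C^+$. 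The only real care required throughout is the bookkeeping of shifts in the asymmetric cotorsion-pair convention $\C=\Ucal\ast\Vcal[1]$; no step presents an actual obstacle.
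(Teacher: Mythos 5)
Your proposal is correct and is exactly the argument the paper intends: the paper proves Lemma \ref{LemABDual} by simply dualizing Lemma \ref{LemAB}, and your explicit dualization (showing $U_A\in\Tcal$ via $\Ext^1(\Scal,-)$-vanishing in part (1), and the octahedral-axiom extension argument with $\Vcal$ closed under extensions in part (2)) matches that proof step for step. The shift bookkeeping and the choice of vanishing conditions ($\Ext^1(S^{\dag},W_B)$, $\Ext^1(S^{\dag},T)$, $\Ext^1(S^{\dag},V_A)$) are all correct.
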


The following Lemma is trivial.
\begin{lem}\label{LemFactor}
Let $S_X[-1]\overset{s_X}{\longrightarrow}X\overset{t_X}{\longrightarrow}T_X\rightarrow S_X$ be a distinguished triangle, with $S_X\in\Scal$ and $T_X\in\Tcal$.
If a morphism $x\in\C(X,Y)$ factors through some $T\in \Tcal$, then $x$ factors through $T_X$.

Similar statement also holds for a distinguished triangle
\[ U_X[-1]\overset{u_X}{\longrightarrow}X\overset{v_X}{\longrightarrow}U_X\rightarrow V_X\quad (U_X\in\Ucal,V_X\in\Vcal) \]
and a morphism $x\in\C(X,Y)$ factoring some $V\in\Vcal$.
\end{lem}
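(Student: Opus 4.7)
The plan is to reduce factoring through $T_X$ to the vanishing of the composite $x\circ s_X$. Applying the contravariant functor $\C(-,Y)$ to the distinguished triangle $S_X[-1]\overset{s_X}{\longrightarrow}X\overset{t_X}{\longrightarrow}T_X\rightarrow S_X$ produces an exact sequence, so $x\in\C(X,Y)$ factors through $t_X$ if and only if $x\circ s_X=0$. Hence the whole content of the lemma is that any morphism factoring through some $T\in\Tcal$ kills $s_X$.

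To show this, write $x=g\circ h$ with $h\in\C(X,T)$ and $g\in\C(T,Y)$, so that $x\circ s_X=g\circ(h\circ s_X)$. The composite $h\circ s_X$ lies in $\C(S_X[-1],T)$, and the defining condition $\Ext^1(\Scal,\Tcal)=0$ of the cotorsion pair $(\Scal,\Tcal)$ is, via the standard identification $\Ext^1(A,B)\cong\C(A[-1],B)$ in a triangulated category, exactly the vanishing $\C(\Scal[-1],\Tcal)=0$. Therefore $h\circ s_X=0$, whence $x\circ s_X=0$, and $x$ factors through $t_X$ as required.

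The parallel statement for the triangle arising from $(\Ucal,\Vcal)$ and a morphism factoring through some $V\in\Vcal$ is handled by the identical argument, substituting $\Ext^1(\Ucal,\Vcal)=0$ (equivalently $\C(\Ucal[-1],\Vcal)=0$) for the role played above by $\Ext^1(\Scal,\Tcal)=0$: the composite of $u_X$ with any factorization $X\to V\to Y$ vanishes in $\C(U_X[-1],V)=0$, forcing $x\circ u_X=0$ and producing the desired factorization through the middle term of the triangle. No real obstacle arises; the lemma is just the standard ``approximation triangles detect $\Tcal$- or $\Vcal$-factorizations'' principle, which is why the paper flags it as trivial.
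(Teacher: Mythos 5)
Your proof is correct and is essentially the paper's own argument: the paper likewise observes that $x\circ s_X=0$ because $\Ext^1(S_X,T)\cong\C(S_X[-1],T)=0$, and then concludes the factorization through $t_X$ from the (co)homological exact sequence of the triangle, with the $(\Ucal,\Vcal)$ case handled identically via $\Ext^1(\Ucal,\Vcal)=0$. Nothing further is needed.
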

\begin{proof}
If $x$ factors through $T\in\Tcal$, then $x\circ s_X=0$ follows from $\Ext^1(S_X,T)=0$. Thus it factors through $t_X$.
Similarly for the latter part.
\end{proof}
\begin{rem}
The dual of Lemma \ref{LemFactor} also holds.
\end{rem}

\end{rem}

\section{Adjoints}

In the following, we fix a twin cotorsion pair $(\Scal,\Tcal),(\Ucal,\Vcal)$ on $\C$. Since this assumption is self-dual, we often omit proofs of dual statements.

\begin{dfn}\label{DefKC}
For any $C\in\C$, define $K_C\in\C$ and $k_C\in\C(K_C,C)$ as follows:
\begin{itemize}
\item[1.] Take a distinguished triangle
\[ S[-1]\rightarrow C\overset{a}{\longrightarrow}T\rightarrow S\quad (S\in\Scal,T\in\Tcal) \]
\item[2.] then, take a distinguished triangle
\[ U\rightarrow T\overset{b}{\longrightarrow}V[1]\rightarrow U[1]\quad (U\in\Ucal,V\in\Vcal) \]
\item[3.] and then, take a distinguished triangle
\[ V\rightarrow K_C\overset{k_C}{\longrightarrow}C\overset{b\circ a}{\longrightarrow} V[1]. \]
\end{itemize}
By the octahedral axiom, $S[-1]\rightarrow K_C\overset{k_C}{\longrightarrow}C\rightarrow S$ 
is also a distinguished triangle.
\[
\xy
(-20,16)*+{S[-1]}="0";
(0.5,15)*+{K_C}="2";
(17,14)*+{U}="4";
(-2,4)*+{C}="6";
(6.1,-0.6)*+{T}="8";
(-5.8,-14.2)*+{V[1]}="10";
(7.5,8.5)*+_{_{\circlearrowright}}="12";
(-5.5,11.5)*+_{_{\circlearrowright}}="14";
(-0.5,-4.3)*+_{_{\circlearrowright}}="14";
{\ar^{} "0";"2"};
{\ar^{} "2";"4"};
{\ar_{} "0";"6"};
{\ar^{k_C} "2";"6"};
{\ar^{a} "6";"8"};
{\ar_{} "6";"10"};
{\ar^{} "4";"8"};
{\ar^{b} "8";"10"};
\endxy
\]
\end{dfn}

\begin{claim}\label{ClaimKC}
$\ \ $
\begin{enumerate}
\item For any $C\in\C$, we have $K_C\in\C^-$.
\item If $C\in\C^+$, then $K_C\in\Hcal$.
\end{enumerate}
\end{claim}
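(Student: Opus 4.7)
The plan is to exploit the two distinguished triangles already supplied by Definition \ref{DefKC}: the octahedral triangle
\[ S[-1]\to K_C\to U\to S \]
and the defining triangle
\[ V\to K_C\xrightarrow{k_C} C\xrightarrow{b\circ a} V[1]. \]

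For (1), I plan to use the octahedral triangle as the witness for $K_C\in\C^{-}=\Scal[-1]\ast\Wcal$. The right-hand term $S$ lies in $\Scal$ by construction, so the only task is to upgrade $U$ from $\Ucal$ to $\Wcal=\Tcal\cap\Ucal$. Rotating the second triangle $U\to T\to V[1]\to U[1]$ yields a distinguished triangle $V\to U\to T\to V[1]$ exhibiting $U$ as an extension of $T\in\Tcal$ by $V$. The twin condition \eqref{Twin}, in its equivalent form $\Vcal\subseteq\Tcal$, places $V$ in $\Tcal$ as well; and $\Tcal=(\Scal[-1])^{\perp}$ is closed under extensions (standard long exact sequence argument applied to $\C(\Scal[-1],-)$). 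Hence $U\in\Tcal\cap\Ucal=\Wcal$, and the octahedral triangle exhibits $K_C\in\Scal[-1]\ast\Wcal=\C^{-}$.

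For (2), part (1) already gives $K_C\in\C^{-}$, so under the hypothesis $C\in\C^{+}$ it suffices to show $K_C\in\C^{+}$. I plan to apply Lemma \ref{LemABDual}(1) to the third triangle $V\to K_C\to C\to V[1]$: the twin condition gives $V\in\Vcal\subseteq\Tcal$, so this triangle satisfies the hypothesis of the lemma with $T=V$, $A=K_C$, $B=C$, and its conclusion yields $K_C\in\C^{+}$. Combined with (1), this gives $K_C\in\C^{+}\cap\C^{-}=\Hcal$.

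The main (and essentially only) subtlety is correctly identifying the octahedral triangle $S[-1]\to K_C\to U\to S$, rather than the defining triangle $V\to K_C\to C\to V[1]$, as the right witness for $\C^{-}$-membership in part (1): the defining triangle has the wrong right-hand endpoint, whereas the octahedral triangle already has its endpoints in $\Scal[-1]$ and $\Ucal$, and the twin condition is precisely what is needed to force the middle-right term into $\Tcal$ as well. Once this observation is in place, both parts reduce to direct applications of the closure properties of $\Tcal$ and of Lemma \ref{LemABDual}.
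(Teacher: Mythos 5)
Your proof is correct and follows essentially the same route as the paper: part (1) uses the octahedral triangle $S[-1]\to K_C\to U\to S$ together with the rotation $V\to U\to T\to V[1]$ and $\Vcal\subseteq\Tcal$ to place $U$ in $\Wcal=\Tcal\cap\Ucal$, and part (2) applies Lemma \ref{LemABDual} to $V\to K_C\to C\to V[1]$. Your explicit observation that it is part (1) of Lemma \ref{LemABDual} (with $T=V\in\Vcal\subseteq\Tcal$) that gives the needed implication $C\in\C^+\Rightarrow K_C\in\C^+$ is a correct and welcome clarification of a step the paper leaves implicit.
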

\begin{proof}
We use the notation in Definition \ref{DefKC}.

{\rm (1)} Remark that we have a distinguished triangle
\[ V\rightarrow U\rightarrow T\overset{b}{\longrightarrow}V[1]. \]
Since $\Tcal\supseteq\Vcal\ni V$ and $\Tcal$ is closed under extensions, it follows $U\in\Ucal\cap\Tcal=\Wcal$. Since $S[-1]\rightarrow K_C\rightarrow U\rightarrow S$ 
is a distinguished triangle, we obtain $K_C\in\C^-$.

{\rm (2)} Since $V\rightarrow K_C\rightarrow C\rightarrow V[1]$
is a distinguished triangle, this immediately follows from Lemma \ref{LemABDual}.
\end{proof}

Dually, we define as follows.
\begin{dfn}\label{DefZC}
For any $C\in\C$, define $Z_C\in\C$ and $z_C\in\C(C,Z_C)$ as follows:
\begin{itemize}
\item[1.] Take a distinguished triangle
\[ V\rightarrow U\overset{a}{\longrightarrow}C\rightarrow V[1]\quad (U\in\Ucal,V\in\Vcal) \]
\item[2.] then, take a distinguished triangle
\[ T[-1]\rightarrow S[-1]\overset{b}{\longrightarrow}U\rightarrow T\quad (S\in\Scal,T\in\Tcal) \]
\item[3.] and then, take a distinguished triangle
\[ S[-1]\overset{a\circ b}{\longrightarrow}C\overset{z_C}{\longrightarrow}Z_C\rightarrow S. \]
\end{itemize}
By the octahedral axiom, $V\rightarrow T\rightarrow Z_C\rightarrow V[1]$ 
is also a distinguished triangle.
\[
\xy
(-16,16)*+{S[-1]}="0";
(-15,-1)*+{U}="2";
(-13.5,-17.5)*+{T}="4";
(-4,2)*+{C}="6";
(2.3,-5.3)*+{Z_C}="8";
(16.5,8.5)*+{V[1]}="10";
(-8.5,-7.5)*+_{_{\circlearrowright}}="12";
(-11.5,5.5)*+_{_{\circlearrowright}}="14";
(4.3,0.5)*+_{_{\circlearrowright}}="14";
{\ar_{b} "0";"2"};
{\ar_{} "2";"4"};
{\ar^{} "0";"6"};
{\ar_{a} "2";"6"};
{\ar_{z_C} "6";"8"};
{\ar^{} "6";"10"};
{\ar_{} "4";"8"};
{\ar_{} "8";"10"};
\endxy
\]
\end{dfn}
\begin{claim}\label{ClaimZC}
$\ \ $
\begin{enumerate}
\item For any $C\in\C$, we have $Z_C\in\C^+$.
\item If $C\in\C^-$, then $Z_C\in\Hcal$.
\end{enumerate}
\end{claim}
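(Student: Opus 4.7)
The plan is to proceed dually to the proof of Claim \ref{ClaimKC}, reusing the notation fixed in Definition \ref{DefZC}.

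For part (1), I would first rotate the triangle produced in step 2 of Definition \ref{DefZC} to obtain the distinguished triangle
\[ S[-1]\overset{b}{\longrightarrow} U\longrightarrow T\longrightarrow S. \]
By the twin cotorsion pair condition (Definition \ref{DefTwin}) we have $\Scal\subseteq\Ucal$, so $S\in\Ucal$. Since $\Ucal={}^{\perp}\!(\Vcal[1])$ is closed under extensions, the triangle forces $T\in\Ucal$. Combined with $T\in\Tcal$, this yields $T\in\Tcal\cap\Ucal=\Wcal$. The octahedral triangle $V\to T\to Z_C\to V[1]$ recorded in Definition \ref{DefZC} then witnesses $Z_C\in\Wcal\ast\Vcal[1]=\C^+$.

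For part (2), since (1) already delivers $Z_C\in\C^+$, it suffices to show $Z_C\in\C^-$ under the hypothesis $C\in\C^-$. For this I would invoke Lemma \ref{LemAB}(1) applied to the distinguished triangle
\[ S[-1]\longrightarrow C\overset{z_C}{\longrightarrow}Z_C\longrightarrow S \]
from step 3 of Definition \ref{DefZC}. That lemma requires the third term to lie in $\Ucal$; here $S\in\Scal\subseteq\Ucal$ by the twin condition, so the hypothesis is met and the property of belonging to $\C^-$ propagates from $C$ to $Z_C$.

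No genuine obstacle is anticipated, as both parts are strictly the formal duals of the arguments used for Claim \ref{ClaimKC}. The only subtlety worth flagging is in part (2): the defining triangle $S[-1]\to C\to Z_C\to S$ superficially matches the shape of Lemma \ref{LemAB}(2), which would propagate $\C^-$ in the wrong direction. The correct propagation $C\rightsquigarrow Z_C$ is obtained by instead viewing $S$ as an object of $\Ucal$ via $\Scal\subseteq\Ucal$ and applying Lemma \ref{LemAB}(1).
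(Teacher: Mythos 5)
Your proposal is correct and is exactly the argument the paper intends: the paper's proof of this claim simply declares it to be the dual of Claim \ref{ClaimKC}, and your explicit dualization (extension-closure of $\Ucal$ giving $T\in\Wcal$, the octahedral triangle $V\to T\to Z_C\to V[1]$ for part (1), and Lemma \ref{LemAB}(1) with $S\in\Scal\subseteq\Ucal$ for part (2)) matches that dual verbatim. Your remark about avoiding the wrong-direction reading of Lemma \ref{LemAB}(2) is the right subtlety to flag.
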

\begin{proof}
This is the dual of Claim \ref{ClaimZC}.
\end{proof}


\begin{prop}\label{PropKC}
For any $C\in\C$, let $K_C\overset{k_C}{\longrightarrow}C$ be as in Definition \ref{DefKC}. Then for any $X\in\C^-$,
\[ \underline{k}_C\circ-\colon\underline{\C}^-(X,K_C)\rightarrow\underline{\C}(X,C) \]
is bijective.
\end{prop}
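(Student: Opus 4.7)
My plan is to analyze the distinguished triangle $V\to K_C\overset{k_C}{\longrightarrow}C\overset{b\circ a}{\longrightarrow}V[1]$ from Definition \ref{DefKC} by applying $\C(X,-)$. The key preliminary observation is that for any $X\in\C^-$ and any $V''\in\Vcal$ one has $\Ext^1(X,V'')=0$, and moreover every morphism $X\to V''$ factors through the $\Wcal$-part of $X$. Indeed, writing $X$ via a distinguished triangle $S_X[-1]\to X\overset{w_X}{\longrightarrow}W_X\to S_X$ with $S_X\in\Scal$, $W_X\in\Wcal\subseteq\Ucal$, and applying $\C(-,V''[1])$, I obtain the exact sequence
\[ \Ext^1(W_X,V'')\longrightarrow\Ext^1(X,V'')\longrightarrow\Ext^1(S_X,V''), \]
whose outer terms vanish by $\Ext^1(\Ucal,\Vcal)=0$ and the twin condition $\Ext^1(\Scal,\Vcal)=0$. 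Applying $\C(-,V'')$ to the same triangle and using $\Ext^1(S_X,V'')=0$ shows that $\C(W_X,V'')\to\C(X,V'')$, $h'\mapsto h'\circ w_X$, is surjective.

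For surjectivity of $\underline{k}_C\circ-$, take any $f\in\C(X,C)$. The long exact sequence for $\C(X,-)$ applied to $V\to K_C\to C\to V[1]$ is $\C(X,K_C)\overset{k_C\circ-}{\longrightarrow}\C(X,C)\to\Ext^1(X,V)=0$, so there exists $g\in\C(X,K_C)$ with $k_C\circ g=f$. Since $K_C\in\C^-$ by Claim \ref{ClaimKC}(1), $g$ represents an element of $\underline{\C}^-(X,K_C)$ mapping to $\underline{f}$.

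For injectivity, suppose $g\in\C(X,K_C)$ satisfies $k_C\circ g=q\circ p$ for some $p\in\C(X,W)$, $q\in\C(W,C)$ with $W\in\Wcal$. Since $W\in\Ucal$, the vanishing $\Ext^1(W,V)=0$ lifts $q$ to some $\tilde q\in\C(W,K_C)$ with $k_C\circ\tilde q=q$. Then $k_C\circ(g-\tilde q\circ p)=0$, so the same exact sequence produces $h\in\C(X,V)$ with $g-\tilde q\circ p=\iota\circ h$, where $\iota\colon V\to K_C$ is the first morphism of the triangle. By the preliminary observation, $h=h'\circ w_X$ for some $h'\in\C(W_X,V)$. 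Therefore
\[ g=\tilde q\circ p+\iota\circ h'\circ w_X \]
factors through $W\oplus W_X\in\Wcal$, yielding $\underline{g}=0$ in $\underline{\C}^-(X,K_C)$.

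The step I expect to be the main obstacle is injectivity: the $\Ext^1$-vanishing alone gives surjectivity, but the kernel of $k_C\circ-\colon\C(X,K_C)\to\C(X,C)$ lies in the image from $\C(X,V)$, which is nonzero in general. The fix is to exploit the $\Scal[-1]\ast\Wcal$ decomposition of $X$ a second time, routing the residual defect through $W_X$ so that it too factors through $\Wcal$.
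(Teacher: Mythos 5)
Your injectivity argument is correct, and it is essentially a reorganization of the paper's: both rest on the decomposition $S_X[-1]\overset{s_X}{\longrightarrow}X\overset{w_X}{\longrightarrow}W_X\rightarrow S_X$ together with $\Ext^1(\Wcal,\Vcal)=0$ and $\Ext^1(\Scal,\Vcal)=0$; lifting $q$ through $k_C$ and then pushing the residual $\C(X,V)$-defect through $w_X$ is fine. The genuine gap is in surjectivity, which rests on the preliminary claim that $\Ext^1(X,V)=0$ for all $X\in\C^-$ and $V\in\Vcal$. That claim is false, and the error is a degree shift in your long exact sequence: applying $\C(-,V[1])$ to the triangle $S_X[-1]\rightarrow X\rightarrow W_X\rightarrow S_X$ gives
\[ \Ext^1(W_X,V)\longrightarrow\Ext^1(X,V)\longrightarrow\C(S_X[-1],V[1])=\C(S_X,V[2]), \]
so the third term is $\Ext^2(S_X,V)$, not $\Ext^1(S_X,V)$, and nothing in the axioms makes it vanish. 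Concretely, for a cluster tilting subcategory $\Tcal$ one has $(\Scal,\Tcal)=(\Ucal,\Vcal)=(\Tcal,\Tcal)$, hence $\C^-=\Tcal[-1]\ast\Tcal=\C$ and $\Vcal=\Tcal$; your claim would then force $\Ext^1(\C,\Tcal)=0$, which already fails for $X=T[1]$ since $\Ext^1(T[1],T)=\C(T,T)\neq 0$.

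Consequently you cannot conclude that an arbitrary $y\in\C(X,C)$ lifts through $k_C$ just from the triangle $V\rightarrow K_C\rightarrow C\rightarrow V[1]$. What is true, and what the paper proves, is that the specific obstruction class $(b\circ a)\circ y\in\C(X,V[1])$ vanishes, and this uses the extra structure of the connecting morphism $b\circ a$ from Definition \ref{DefKC}: since $a\circ y\circ s_X\in\C(S_X[-1],T)=\Ext^1(S_X,T)=0$, the composite $a\circ y$ factors as $t\circ w_X$ for some $t\in\C(W_X,T)$, and then $(b\circ a)\circ y=(b\circ t)\circ w_X=0$ because $b\circ t\in\C(W_X,V[1])=\Ext^1(W_X,V)=0$. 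You need to route the obstruction through $T\in\Tcal$ and $W_X$ in this way; the blanket vanishing of $\C(X,V[1])$ is not available. (Your closing remark correctly identifies injectivity as the place where $\C(X,V)\neq 0$ must be handled, but the same caution applies one degree higher to $\C(X,V[1])$ in the surjectivity step.)
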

\begin{proof}
Take a distinguished triangle
\[ S_X[-1]\overset{s_X}{\longrightarrow}X\overset{w_X}{\longrightarrow}W_X\rightarrow S_X\quad(S_X\in\Scal,W_X\in\Wcal). \]

First we show the injectivity.
Suppose $x\in\C(X,K_C)$ satisfies $\underline{k}_C\circ\underline{x}=0$. 
By definition, this means that $k_C\circ x$ factors through some object in $\Wcal$. Thus by Lemma \ref{LemFactor}, $k_C\circ x$ factors through $w_X$. This gives a morphism of triangles
\[
\xy
(-22,6)*+{S_X[-1]}="0";
(-6,6)*+{X}="2";
(6,6)*+{W_X}="4";
(18,6)*+{S_X}="6";
(-22,-6)*+{V}="10";
(-6,-6)*+{K_C}="12";
(6,-6)*+{C}="14";
(18,-6)*+{V[1].}="16";
{\ar^{s_X} "0";"2"};
{\ar^{w_X} "2";"4"};
{\ar^{} "4";"6"};
{\ar^{} "0";"10"};
{\ar^{x} "2";"12"};
{\ar^{} "4";"14"};
{\ar^{} "6";"16"};
{\ar_{} "10";"12"};
{\ar_{k_C} "12";"14"};
{\ar_{} "14";"16"};
{\ar@{}|\circlearrowright "0";"12"};
{\ar@{}|\circlearrowright "2";"14"};
{\ar@{}|\circlearrowright "4";"16"};
\endxy
\]
Since $\Ext^1(S_X,V)=0$, it follows $x\circ S_X=0$, and thus $x$ factors through $W_X$,
\[
\xy
(-26,6)*+{S_X[-1]}="0";
(-8,6)*+{X}="2";
(8,6)*+{W_X}="4";
(-4,-7)*+{K_C}="8";
(-22,10)*+{}="5";
(0,10)*+{}="7";
{\ar^{s_X} "0";"2"};
{\ar^{w_X} "2";"4"};
{\ar@/_0.60pc/_{0} "0";"8"};
{\ar_{x} "2";"8"};
{\ar_{} "4";"8"};
{\ar@{}|\circlearrowright "5";"8"};
{\ar@{}|\circlearrowright "7";"8"};
\endxy
\]
which means $\underline{x}=0$.

Second, we show the surjectivity. Take any $y\in\C(X,C)$. 
In the notation of Definition \ref{DefKC},
\[
\xy
(-20,16)*+{S[-1]}="0";
(0.5,15)*+{K_C}="2";
(17,14)*+{U}="4";
(-2,4)*+{C}="6";
(5.9,-0.6)*+{T}="8";
(-5.8,-14.2)*+{V[1]}="10";
(7.5,8.5)*+_{_{\circlearrowright}}="12";
(-5.5,11.5)*+_{_{\circlearrowright}}="14";
(-0.5,-4.3)*+_{_{\circlearrowright}}="14";
{\ar^{} "0";"2"};
{\ar^{} "2";"4"};
{\ar_{} "0";"6"};
{\ar^{k_C} "2";"6"};
{\ar^{a} "6";"8"};
{\ar_{b\circ a} "6";"10"};
{\ar^{} "4";"8"};
{\ar^{b} "8";"10"};
\endxy
\]
since $\Ext^1(S_X,T)=0$, $y$ induces a morphism of triangles
\[
\xy
(-22,6)*+{S_X[-1]}="0";
(-7,6)*+{X}="2";
(6,6)*+{W_X}="4";
(18,6)*+{S_X}="6";
(-22,-6)*+{S[-1]}="10";
(-7,-6)*+{C}="12";
(6,-6)*+{T}="14";
(18,-6)*+{S.}="16";
{\ar^{s_X} "0";"2"};
{\ar^{w_X} "2";"4"};
{\ar^{} "4";"6"};
{\ar^{} "0";"10"};
{\ar^{y} "2";"12"};
{\ar^{{}^{\exists}t} "4";"14"};
{\ar^{} "6";"16"};
{\ar_{} "10";"12"};
{\ar_{a} "12";"14"};
{\ar_{} "14";"16"};
{\ar@{}|\circlearrowright "0";"12"};
{\ar@{}|\circlearrowright "2";"14"};
{\ar@{}|\circlearrowright "4";"16"};
\endxy
\]
Since $\Ext^1(W_X,V)=0$, we obtain
\[ b\circ a\circ y=b\circ t\circ w_X =0\circ w_X=0. \]
Thus $y$ factors through $k_C$.
\[
\xy
(-22,0)*+{V}="0";
(-9,0)*+{K_C}="2";
(8,0)*+{C}="4";
(12,10)*+{}="5";
(20,0)*+{V[1]}="6";
(-2,12)*+{X}="-3";
(1,-4)*+{}="3";
{\ar^{} "0";"2"};
{\ar_{k_C} "2";"4"};
{\ar_{b\circ a} "4";"6"};
{\ar^{} "-3";"2"};
{\ar@/^0.80pc/^{0} "-3";"6"};
{\ar^{y} "-3";"4"};
{\ar@{}|\circlearrowright"-3";"3"};
{\ar@{}|\circlearrowright"4";"5"};
\endxy
\]
\end{proof}

Dually, we have the following.
\begin{prop}\label{PropZC}
For any $C\in\C$, let $C\overset{z_C}{\longrightarrow} Z_C$  be as in Definition \ref{DefZC}. Then, for any $Y\in\C^+$,
\[ -\circ\underline{z}_C\colon\underline{\C}^+(Z_C,Y)\rightarrow\underline{\C}(C,Y) \]
is bijective.
\end{prop}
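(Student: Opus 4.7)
Since the hypothesis $\Ext^1(\Scal,\Vcal)=0$ together with the definitions of $\Wcal$, $\C^\pm$, $\Hcal$ and the constructions of $k_C$ and $z_C$ are symmetric under reversing arrows in $\C$ combined with the swap $(\Scal,\Tcal)\leftrightarrow(\Vcal,\Ucal)$, Proposition \ref{PropZC} is the formal dual of Proposition \ref{PropKC}. The plan is therefore to mirror that proof step by step. Fix $Y\in\C^+$ with a distinguished triangle $W_Y\to Y\overset{p_Y}{\longrightarrow}V_Y[1]\to W_Y[1]$ ($W_Y\in\Wcal$, $V_Y\in\Vcal$), and retain the notation of Definition \ref{DefZC}: the triangles $V\to U\overset{a}{\to} C\to V[1]$, $T[-1]\to S[-1]\overset{b}{\to} U\to T$, and $S[-1]\overset{a\circ b}{\to} C\overset{z_C}{\to} Z_C\to S$.

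For surjectivity, given $y\in\C(C,Y)$, I aim to show $y\circ(a\circ b)=0$, which will force $y$ to factor through $z_C$ and provide the required preimage in $\underline{\C}^+(Z_C,Y)$. Postcomposing $y\circ a\colon U\to Y$ with $p_Y$ yields an element of $\C(U,V_Y[1])=\Ext^1(U,V_Y)$, which vanishes since $U\in\Ucal$ and $V_Y\in\Vcal$. Hence $y\circ a$ factors through $w_Y\colon W_Y\to Y$ as $y\circ a=w_Y\circ\alpha$ for some $\alpha\colon U\to W_Y$ (this is the dual of Lemma \ref{LemFactor}). Consequently $y\circ a\circ b=w_Y\circ(\alpha\circ b)$, and $\alpha\circ b\in\C(S[-1],W_Y)=\Ext^1(S,W_Y)=0$ since $S\in\Scal$ and $W_Y\in\Tcal$. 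Thus $y\circ a\circ b=0$, and $y$ factors through $z_C$.

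For injectivity, suppose $y\in\C(Z_C,Y)$ satisfies $\underline{y}\circ\underline{z}_C=0$; then $y\circ z_C$ factors through some object of $\Wcal\subseteq\Ucal$, so by the same dual of Lemma \ref{LemFactor} one has $y\circ z_C=w_Y\circ\alpha$ for some $\alpha\colon C\to W_Y$. Then $p_Y\circ y\circ z_C=p_Y\circ w_Y\circ\alpha=0$, so $p_Y\circ y$ annihilates $z_C$ and therefore factors through the connecting morphism $Z_C\to S$, producing an element of $\C(S,V_Y[1])=\Ext^1(S,V_Y)$, which vanishes by the twin condition. Hence $p_Y\circ y=0$, so $y$ factors through $w_Y$, and $\underline{y}=0$.

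The essential content of the argument is already present in the proof of Proposition \ref{PropKC}; the only real subtlety is bookkeeping — choosing the correct rotation of the triangle for $Y$ so that $W_Y$ plays the role of the $\Ucal$-part in the dual of Lemma \ref{LemFactor}. It is worth noting that the twin hypothesis $\Ext^1(\Scal,\Vcal)=0$ enters only in the injectivity step (to force $\Ext^1(S,V_Y)=0$), while surjectivity uses only the $(\Scal,\Tcal)$ cotorsion pair condition $\Ext^1(\Scal,\Tcal)=0$ applied to $W_Y\in\Tcal$.
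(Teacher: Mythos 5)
Your argument is correct and is precisely what the paper intends: Proposition \ref{PropZC} is stated without proof as the formal dual of Proposition \ref{PropKC}, and your two steps are the faithful dualization of the injectivity and surjectivity arguments given there (with $W_Y\in\Wcal$ correctly playing the dual role of $W_X$, and the twin condition entering via $\Ext^1(S,V_Y)=0$ exactly where $\Ext^1(S_X,V)=0$ entered before). The only quibble is with your closing aside: surjectivity also uses $\Ext^1(U,V_Y)=0$, i.e.\ the $(\Ucal,\Vcal)$ cotorsion condition, not \emph{only} $\Ext^1(\Scal,\Tcal)=0$ --- though your main point, that the twin condition $\Ext^1(\Scal,\Vcal)=0$ is needed only in the injectivity step, is right.
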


In the terminology of \cite{B}, Proposition \ref{PropZC} means that for any $C\in\C$, $Z_C\overset{z_C}{\longrightarrow}C$ gives a reflection of $C$ along the inclusion functor $\underline{\C}^+\hookrightarrow\underline{\C}$. As a corollary, we obtain the following.
\begin{cor}\label{CorZC}$($Proposition 3.1.2 and Proposition 3.1.3 in \cite{B}$)$
The inclusion functor $i^+\colon\underline{\C}^+\hookrightarrow\underline{\C}$ has a left adjoint $\tau^+\colon\underline{\C}\rightarrow\underline{\C}^+$. If we denote the adjunction by $\eta\colon\mathrm{Id}_{\underline{\C}}\Longrightarrow i^+\circ\tau^+$, then there exists a natural isomorphism $Z_C\cong\tau^+(C)$ in $\underline{\C}$, compatible with $\underline{z}_C$ and $\eta_C$.
\[
\xy
(-8,8)*+{C}="0";
(8,8)*+{Z_C}="2";
(0,-5)*+{\tau^+(C)}="4";
(0,12)*+{}="6";
{\ar^{\underline{z_C}} "0";"2"};
{\ar_{\eta_C} "0";"4"};
{\ar^{\cong} "2";"4"};
{\ar@{}|\circlearrowright "6";"4"};
\endxy
\]
In particular, $Z_C$ is determined up to an isomorphism in $\underline{\C}$.
\end{cor}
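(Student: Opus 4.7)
The plan is to read this corollary as a direct categorical consequence of Proposition~\ref{PropZC} combined with the standard fact (cited from \cite{B}) that a functor has a left adjoint precisely when every object admits a reflection along it. Proposition~\ref{PropZC} is exactly the universal property of such a reflection: for each $C\in\C$, the morphism $\underline{z}_C\colon C\to Z_C$ turns every morphism $C\to Y$ with $Y\in\underline{\C}^+$ into a unique factorization through $Z_C$. Since $Z_C\in\C^+$ by Claim~\ref{ClaimZC}(1), $Z_C$ genuinely lies in the essential image of $i^+$.

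First I would define $\tau^+$ on objects by $\tau^+(C):=Z_C$ and on a morphism $\underline{f}\in\underline{\C}(C,C')$ by invoking Proposition~\ref{PropZC} with $Y=Z_{C'}\in\underline{\C}^+$: the composite $\underline{z}_{C'}\circ\underline{f}\in\underline{\C}(C,Z_{C'})$ has a unique preimage $\tau^+(\underline{f})\in\underline{\C}^+(Z_C,Z_{C'})$ under $-\circ\underline{z}_C$. Functoriality (preservation of identities and composition) is then immediate from the uniqueness clause: both sides of each equation witness a factorization of the same morphism through $\underline{z}_C$. Setting $\eta_C:=\underline{z}_C$, the defining relation $\tau^+(\underline{f})\circ\underline{z}_C=\underline{z}_{C'}\circ\underline{f}$ is precisely the naturality square for $\eta\colon\mathrm{Id}_{\underline{\C}}\Rightarrow i^+\circ\tau^+$, and the adjunction bijection
\[
\underline{\C}^+(\tau^+(C),Y)\;\longrightarrow\;\underline{\C}(C,i^+(Y)),\quad g\longmapsto g\circ\eta_C,
\]
is Proposition~\ref{PropZC} verbatim. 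Naturality of this bijection in $C$ and $Y$ follows from the construction of $\tau^+$ on morphisms and from functoriality of composition, respectively.

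Finally, the natural isomorphism $Z_C\cong\tau^+(C)$ compatible with $\underline{z}_C$ and $\eta_C$ is tautological from the choice $\tau^+(C)=Z_C$, $\eta_C=\underline{z}_C$; the uniqueness of $Z_C$ up to canonical isomorphism in $\underline{\C}$ then reduces to the general uniqueness of representing objects, since any two reflections of $C$ along $i^+$ are linked by a unique isomorphism compatible with their respective units. The whole argument is essentially bookkeeping, and I do not anticipate a real obstacle: the only point requiring a moment of care is that Proposition~\ref{PropZC} is phrased in $\underline{\C}$ (not $\C$), which is exactly what is needed to define $\tau^+$ on morphisms of the quotient category $\underline{\C}$ rather than only of $\C$.
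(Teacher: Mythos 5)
Your proposal is correct and matches the paper's approach: the paper observes that Proposition~\ref{PropZC} (together with Claim~\ref{ClaimZC}) exhibits $\underline{z}_C\colon C\to Z_C$ as a reflection of $C$ along $i^+$ and then simply cites Propositions 3.1.2 and 3.1.3 of \cite{B} for the passage from pointwise reflections to a left adjoint. You have merely unwound that cited general fact explicitly (definition of $\tau^+$ on morphisms via uniqueness, $\eta_C=\underline{z}_C$, naturality), which is exactly the intended argument.
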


Dually the following holds.
\begin{cor}\label{CorKC}
The inclusion functor $i^-\colon\underline{\C}^-\hookrightarrow\underline{\C}$ has a right adjoint $\tau^-\colon\underline{\C}\rightarrow\underline{\C}^-$. For any $C\in\C$, there is a natural isomorphism $K_C\cong \tau^-(C)$ in $\underline{\C}^-$.
\end{cor}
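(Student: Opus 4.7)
The plan is to dualize the argument for Corollary \ref{CorZC} verbatim. The only nontrivial input we need is Proposition \ref{PropKC}, which states that for every $C\in\C$ and every $X\in\C^-$, the map
\[ \underline{k}_C\circ -\colon\underline{\C}^-(X,K_C)\longrightarrow\underline{\C}(X,C) \]
is a bijection. By definition, this says that $K_C\overset{\underline{k}_C}{\to}C$ is a \emph{coreflection} of $C$ along the fully faithful inclusion $i^-\colon\underline{\C}^-\hookrightarrow\underline{\C}$, dually to how $C\overset{\underline{z}_C}{\to}Z_C$ is a reflection along $i^+$.

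Once every object of $\underline{\C}$ admits such a coreflection in $\underline{\C}^-$, the standard criterion (Proposition 3.1.2 and Proposition 3.1.3 of \cite{B}, applied to the dual situation) asserts that $i^-$ automatically admits a right adjoint $\tau^-\colon\underline{\C}\to\underline{\C}^-$. The construction $C\mapsto K_C$ then assembles into a functor naturally isomorphic to $\tau^-$, with $\underline{k}_C$ corresponding to the counit $\varepsilon_C\colon i^-\tau^-(C)\to C$; the natural isomorphism $K_C\cong\tau^-(C)$ in $\underline{\C}^-$ compatible with these morphisms follows from the uniqueness of representing objects.

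There is essentially no obstacle, since the real work was already carried out in Proposition \ref{PropKC}. The only point one has to track carefully is the direction of the arrows: the representing morphism $K_C\to C$ points into $C$ rather than out of it, so one gets a right adjoint (and coreflective subcategory) rather than a left adjoint (and reflective subcategory) as in Corollary \ref{CorZC}.
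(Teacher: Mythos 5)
Your proposal is correct and follows exactly the paper's route: the paper obtains this corollary by dualizing Corollary \ref{CorZC}, whose proof is precisely the observation that Proposition \ref{PropKC} exhibits $\underline{k}_C$ as a (co)reflection and then invokes Proposition 3.1.2 and Proposition 3.1.3 of \cite{B}. Nothing further is needed.
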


\begin{lem}\label{LemUZ}
For any $C\in\C$, the following are equivalent.
\begin{enumerate}
\item $\tau^+(C)=0$.
\item $Z_C\in\Wcal$
\item $C\in\Ucal$.
\end{enumerate}
\end{lem}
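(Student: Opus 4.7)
The equivalence (1) $\Leftrightarrow$ (2) is immediate from Corollary \ref{CorZC} together with Remark \ref{TrivW}: the former identifies $Z_C$ with $\tau^+(C)$ in $\underline{\C}$, while the latter says an object vanishes in $\underline{\C}$ exactly when it lies in $\Wcal$. The substance of the lemma therefore reduces to proving (2) $\Leftrightarrow$ (3).

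For (3) $\Rightarrow$ (2), I plan to exhibit a particular realization of the construction in Definition \ref{DefZC} that visibly lies in $\Wcal$, and then invoke Corollary \ref{CorZC} with Remark \ref{TrivW} to transfer the conclusion to every realization. Concretely, if $C \in \Ucal$ I take the trivial triangle $0 \to C \overset{\mathrm{id}}{\to} C \to 0$ in step 1 (so $V = 0$, $U = C$, $a = \mathrm{id}_C$). Step 2 then produces a distinguished triangle $S[-1] \to C \to T \to S$ with $S \in \Scal$ and $T \in \Tcal$, and step 3 identifies $Z_C$ with $T$. Since $\Scal \subseteq \Ucal$ and $\Ucal = {}^{\perp}\!(\Vcal[1])$ is closed under extensions, the rotated triangle $C \to T \to S \to C[1]$ forces $T \in \Ucal$, whence $T \in \Wcal$.

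For (2) $\Rightarrow$ (3), my goal is to show $\Ext^1(C, V') = 0$ for every $V' \in \Vcal$, which gives $C \in {}^{\perp}\!(\Vcal[1]) = \Ucal$. The octahedron of Definition \ref{DefZC} delivers a morphism of distinguished triangles from the rotation of step 2, $S[-1] \to U \to T \to S$, to step 3, $S[-1] \to C \to Z_C \to S$, with vertical arrows $\mathrm{id}_{S[-1]}$, $a$, some $T \to Z_C$, and $\mathrm{id}_S$. Applying $\C(-, V')$ yields a map of long exact sequences. In the step-3 LES, the hypotheses $\Ext^1(Z_C, V') = 0$ (since $Z_C \in \Wcal \subseteq \Ucal$) and $\Ext^1(S, V') = 0$ (the twin condition) make the connecting map $\Ext^1(C, V') \to \Ext^2(S, V')$ injective. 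Naturality of the LES expresses this same connecting map as the composite $\Ext^1(C, V') \overset{a^*}{\to} \Ext^1(U, V') \to \Ext^2(S, V')$; since $U \in \Ucal$ the middle term vanishes, so the composite is zero. Injectivity plus zero forces $\Ext^1(C, V') = 0$.

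The one non-routine ingredient is the morphism of distinguished triangles appearing in the last paragraph, but this is built into the octahedral construction of Definition \ref{DefZC} (the arrow $T \to Z_C$ being precisely the octahedral map), so no extra TR3 argument is needed.
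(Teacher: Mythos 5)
Your proposal is correct. The equivalence of (1) and (2) and the direction (3) $\Rightarrow$ (2) are handled exactly as in the paper: the paper likewise specializes Definition \ref{DefZC} by taking $a$ to be an isomorphism (so $V=0$, $U=C$) and observes that $Z_C\cong T$ lies in $\Tcal\cap\Ucal=\Wcal$ because $\Ucal\supseteq\Scal$ is closed under extensions; your appeal to Corollary \ref{CorZC} and Remark \ref{TrivW} to pass from one realization of $Z_C$ to all of them is the implicit content of the paper's ``we may assume.'' Where you genuinely diverge is (2) $\Rightarrow$ (3). The paper argues via the first triangle $V\to U\overset{a}{\to}C\overset{c}{\to}V[1]$ of Definition \ref{DefZC}: since $c$ factors as $d\circ z_C$ with $d\in\Ext^1(Z_C,V)=0$ (as $Z_C\in\Wcal\subseteq\Ucal$), the triangle splits and $C$ is a direct summand of $U$, hence lies in $\Ucal$ by closure under summands. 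You instead prove $\Ext^1(C,V')=0$ for every $V'\in\Vcal$ by a long-exact-sequence argument on the third triangle $S[-1]\to C\to Z_C\to S$: the connecting map $\Ext^1(C,V')\to\Ext^2(S,V')$ is injective because $\Ext^1(Z_C,V')=0$ (your additional appeal to $\Ext^1(S,V')=0$ is not needed for this injectivity), and it vanishes because it factors through $\Ext^1(U,V')=0$ via $a\circ b$. Both arguments use the same two inputs ($\Wcal\subseteq\Ucal$ and $\Ucal={}^{\perp}(\Vcal[1])$); the paper's splitting argument is marginally shorter and identifies $C$ concretely inside $U$, whereas yours reads off the defining orthogonality condition for $\Ucal$ directly and avoids invoking closure under direct summands. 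Either proof is acceptable.
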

\begin{proof}
The equivalence between {\rm (1)} and {\rm (2)} follows from Remark \ref{TrivW} and Corollary \ref{CorZC}.

Suppose {\rm (2)} holds. Then, in the notation of the following diagram,
\begin{equation}\label{RefDiag}
\xy
(-16,16)*+{S[-1]}="0";
(-15,-1)*+{U}="2";
(-13.5,-17.5)*+{T}="4";
(-4,2)*+{C}="6";
(2.3,-5.3)*+{Z_C}="8";
(16.5,8.5)*+{V[1]}="10";
(-8.5,-7.5)*+_{_{\circlearrowright}}="12";
(-11.5,5.5)*+_{_{\circlearrowright}}="14";
(4.3,0.5)*+_{_{\circlearrowright}}="14";
{\ar_{b} "0";"2"};
{\ar_{} "2";"4"};
{\ar^{} "0";"6"};
{\ar_{a} "2";"6"};
{\ar_{z_C} "6";"8"};
{\ar^{c} "6";"10"};
{\ar_{e} "4";"8"};
{\ar_{d} "8";"10"};
\endxy
\end{equation}
we have $d=0$ since $\Ext^1(Z_C,V)=0$. Thus it follows $c=0$ and $C$ becomes a direct summand of $U$, which means $C\in\Ucal$.

Conversely, suppose {\rm (3)} holds. Then in the notation in $(\ref{RefDiag})$, we may assume $a$ and $e$ are isomorphisms. Since $T\in\Wcal$, we obtain $Z_C\in\Wcal$.
\end{proof}

Dually we have the following.
\begin{lem}\label{LemTK}
For any $C\in\C$, the following are equivalent.
\begin{enumerate}
\item $\tau^-(C)=0$.
\item $K_C\in\Wcal$
\item $C\in\Tcal$.
\end{enumerate}
\end{lem}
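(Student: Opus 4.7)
The plan is to establish the three equivalences in direct analogy with (the dual of) Lemma \ref{LemUZ}. The equivalence $(1) \Leftrightarrow (2)$ follows immediately from Remark \ref{TrivW} and Corollary \ref{CorKC}: the corollary identifies $K_C$ with $\tau^-(C)$ in $\underline{\C}$, and the remark tells us that $A \in \Wcal$ if and only if $A = 0$ in $\underline{\C}$.

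For $(2) \Rightarrow (3)$, I would work in the notation of Definition \ref{DefKC} and rotate the octahedral triangle $S[-1] \to K_C \to U \to S$ to $K_C \to U \to S \xrightarrow{\partial} K_C[1]$. Since $K_C \in \Wcal \subseteq \Tcal$ and $S \in \Scal$, the cotorsion pair $(\Scal, \Tcal)$ forces $\partial \in \Ext^1(S, K_C) = 0$. The octahedral axiom yields a morphism of triangles (with identities on the two copies of $S$ and $k_C$ on $K_C \to C$) from this to the rotation $C \xrightarrow{a} T \to S \xrightarrow{\partial'} C[1]$ of the first triangle of Definition \ref{DefKC}. Commutativity of the appropriate shifted square then gives $\partial' = k_C[1] \circ \partial = 0$, so this triangle splits, yielding $T \cong C \oplus S$. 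Thus $C$ is a direct summand of $T \in \Tcal$, and closure of $\Tcal$ under direct summands gives $C \in \Tcal$.

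For $(3) \Rightarrow (2)$, if $C \in \Tcal$ then $\Ext^1(\Scal, C) = 0$, so in Definition \ref{DefKC} we may choose the first triangle to be the trivial split $0 \to C \xrightarrow{\mathrm{id}} C \to 0$ (that is, $S = 0$, $T = C$, $a = \mathrm{id}_C$). The triangles of steps 2 and 3 then coincide, both being $V \to U \to C \to V[1]$, which forces $K_C \cong U$. Since $V \in \Vcal \subseteq \Tcal$ and $C \in \Tcal$, and $\Tcal = (\Scal[-1])^{\perp}$ is closed under extensions, we obtain $U \in \Tcal$; combined with $U \in \Ucal$ this yields $K_C \cong U \in \Wcal$.

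The main obstacle is the octahedral bookkeeping in $(2) \Rightarrow (3)$, in particular justifying the morphism of triangles with identities on the two copies of $S$ and $k_C$ in the middle; once this is granted, the rest is a routine application of the Ext vanishings built into the twin cotorsion pair and the closure properties of the subcategories involved.
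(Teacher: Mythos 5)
Your proof is correct and is essentially the explicit dualization of the paper's proof of Lemma \ref{LemUZ}, which is exactly what the paper intends when it states Lemma \ref{LemTK} with the remark ``dually the following holds.'' The octahedral bookkeeping you flag as the main obstacle is already supplied by the commuting octahedron diagram in Definition \ref{DefKC} (the triangle $S[-1]\rightarrow K_C\rightarrow U\rightarrow S$ maps to $S[-1]\rightarrow C\rightarrow T\rightarrow S$ via $(k_C,\,U\rightarrow T,\,\mathrm{id}_S)$), so no further justification is needed.
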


\bigskip

 By the same argument as in \cite{AN}, we can also show the following. Since we do not use it in this article, we only introduce the results.
\begin{prop}
The inclusion functor $i_{\Ucal}\colon\underline{\Ucal}=\Ucal/\Wcal\hookrightarrow\underline{\C}$ has a right adjoint $\sigma_{\Ucal}\colon\underline{\C}\rightarrow\underline{\Ucal}$. For any $C\in\C$, $\sigma_{\Ucal}(C)$ is naturally isomorphic to $U_C\in\underline{\Ucal}$ appearing in a distinguished triangle
\[ V_C\rightarrow U_C\overset{u_C}{\longrightarrow}C\overset{v_C}{\longrightarrow}V_C[1]\qquad (U_C\in\Ucal,V_C\in\Vcal). \]

Moreover for any $C\in\C$, the following are equivalent.
\begin{enumerate}
\item $\sigma_{\Ucal}(C)=0$.
\item $U_C\in\Wcal$.
\item $C\in\C^+$.
\end{enumerate}
\end{prop}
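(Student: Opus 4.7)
The plan is to mirror the strategy of Proposition~\ref{PropKC} and Corollary~\ref{CorKC}. The key step is to show that for every $U\in\Ucal$ the map
\[ \underline{u}_C\circ-\colon\underline{\Ucal}(U,U_C)\rightarrow\underline{\C}(U,C) \]
is bijective; once this is done, the standard corepresentability argument used to deduce Corollary~\ref{CorKC} from Proposition~\ref{PropKC} produces the right adjoint $\sigma_{\Ucal}$ together with the natural isomorphism $\sigma_{\Ucal}(C)\cong U_C$ in $\underline{\Ucal}$, with counit induced by $\underline{u}_C$.

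Surjectivity is essentially free: for any $y\in\C(U,C)$, the composite $v_C\circ y$ lies in $\Ext^1(U,V_C)=0$ by Definition~\ref{DefDivide}(i) applied to $(\Ucal,\Vcal)$, so $y$ lifts strictly to some $x\in\C(U,U_C)$ with $u_C\circ x=y$. Injectivity is the real work. Given $x\in\C(U,U_C)$ with $u_C\circ x$ factoring through some $W'\in\Wcal$, I would take a $(\Scal,\Tcal)$-decomposition
\[ S_U[-1]\overset{s_U}{\longrightarrow}U\overset{w_U}{\longrightarrow}W_U\rightarrow S_U\qquad (S_U\in\Scal,\ W_U\in\Tcal). \]
Applying $\Ext^1(-,\Vcal)$ and invoking $U\in\Ucal$ together with the twin condition $\Ext^1(\Scal,\Vcal)=0$ forces $W_U\in\Ucal$, hence $W_U\in\Wcal$. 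By Lemma~\ref{LemFactor} one writes $u_C\circ x=\alpha\circ w_U$ for some $\alpha\colon W_U\rightarrow C$, and since $\Ext^1(W_U,V_C)=0$, $\alpha$ lifts through $u_C$ to some $\beta\colon W_U\rightarrow U_C$. Then $x-\beta\circ w_U$ has zero composite with $u_C$ and factors as $(V_C\rightarrow U_C)\circ\delta$ for some $\delta\in\C(U,V_C)$. A second appeal to the twin condition via the exact sequence
\[ \C(W_U,V_C)\rightarrow\C(U,V_C)\rightarrow\Ext^1(S_U,V_C)=0 \]
forces $\delta$ to factor through $w_U$; consequently $x$ itself factors through $W_U\in\Wcal$, i.e.\ $\underline{x}=0$.

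For the equivalences, (1)$\Leftrightarrow$(2) follows at once from $\sigma_{\Ucal}(C)\cong U_C$ in $\underline{\Ucal}=\Ucal/\Wcal$ together with the analogue of Remark~\ref{TrivW}. For (2)$\Rightarrow$(3), rotating $V_C\rightarrow U_C\rightarrow C\rightarrow V_C[1]$ puts $C\in U_C\ast V_C[1]\subseteq\Wcal\ast\Vcal[1]=\C^+$ once $U_C\in\Wcal$. For (3)$\Rightarrow$(2), any witness $W\rightarrow C\rightarrow V[1]\rightarrow W[1]$ with $W\in\Wcal,\ V\in\Vcal$ rotates to $V\rightarrow W\rightarrow C\rightarrow V[1]$, a valid $(\Ucal,\Vcal)$-decomposition of $C$; choosing this triangle in the construction of $U_C$ yields $U_C\cong W\in\Wcal$. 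The chief obstacle is the injectivity step, where three successive lifts must be chained (through $w_U$, through $u_C$, and again through $w_U$) and the twin condition $\Ext^1(\Scal,\Vcal)=0$ enters at two distinct places: first to certify $W_U\in\Ucal\cap\Tcal$, and then to factor the residual $\delta$ through $w_U$.
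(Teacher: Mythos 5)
Your argument is correct and follows exactly the template the paper uses for the analogous adjoint statements (Proposition \ref{PropKC} and Corollary \ref{CorKC}); the paper itself omits the proof of this proposition, deferring to \cite{AN}, where the same representability argument is carried out. Your key computation — showing that the $\Tcal$-approximation $W_U$ of $U\in\Ucal$ lands in $\Wcal$ and then chaining the liftings through $w_U$ and $u_C$ to get injectivity, with surjectivity from $\Ext^1(\Ucal,\Vcal)=0$ — is precisely what is needed, and your handling of the three equivalences is also correct.
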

Dually, the inclusion functor $i_{\Tcal}\colon\underline{\Tcal}=\Tcal/\Wcal\hookrightarrow\underline{\C}$ admits a left adjoint $\sigma_{\Tcal}\colon\underline{\C}\rightarrow\underline{\Tcal}$. We have a natural isomorphism $\sigma_{\Tcal}(C)\cong T_C\ \ \text{in}\ \underline{\C}$ for any $C\in\C$, and
\[ \sigma_{\Tcal}(C)=0\ \Longleftrightarrow\ T_C\in\Wcal\ \Longleftrightarrow\ C\in\C^-, \]
where $S_C[-1]\rightarrow C\rightarrow T_C\rightarrow S_C$ is a distinguished triangle with $S_C\in\Scal, T_C\in\Tcal$.

\section{$\underline{\Hcal}$ is preabelian}
Now we construct the (co-)kernel of a morphism in $\underline{\Hcal}$.
\begin{dfn}\label{DefMf}
For any $A\in\C^-$, $B\in\C$ and $f\in\C(A,B)$, define $M_f\in\C$ and $m_f\in\C(B,M_f)$ as follows.
\begin{itemize}
\item[1.] Take a distinguished triangle
\[ S_A[-1]\overset{s_A}{\longrightarrow}A\overset{w_A}{\longrightarrow}W_A\rightarrow S_A, \]
\item[2.] then, take a distinguished triangle
\[ S_A[-1]\overset{f\circ s_A}{\longrightarrow}B\overset{m_f}{\longrightarrow}M_f\rightarrow S_A. \]
\end{itemize}
\[
\xy
(0,12)*+{S_A[-1]}="0";
(0,2)*+{A}="2";
(0,-8)*+{W_A}="4";
(14,2)*+{B}="6";
(30,-8)*+{M_f}="8";
(10,8)*+{}="10";
{\ar_{s_A} "0";"2"};
{\ar_{w_A} "2";"4"};
{\ar^{} "0";"6"};
{\ar_{f} "2";"6"};
{\ar^{m_f} "6";"8"};
{\ar@{}|\circlearrowright "2";"10"};
\endxy
\]
\end{dfn}

\begin{prop}\label{PropMf}
For any $A\in\C^-$, $B\in\C$ and $f\in\C(A,B)$, let $B\overset{m_f}{\longrightarrow}M_f$ be as in Definition \ref{DefMf}. Then, we have the following.
\begin{enumerate}
\item $\underline{m}_f\circ\underline{f}=0$.
\item $m_f$ induces a bijection
\[ -\circ\underline{m}_f\colon\underline{\C}(M_f,Y)\overset{\cong}{\longrightarrow}\{ \beta\in\underline{\C}(B,Y)\mid\beta\circ\underline{f}=0 \} \]
for any $Y\in\C^+$.
\item If $B\in\C^-$, then $M_f\in\C^-$.
\end{enumerate}
\end{prop}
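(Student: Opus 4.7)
The plan is to treat the three parts separately, handling (1) and (3) quickly and concentrating effort on (2), which I expect to be the main obstacle.

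For (1), the second distinguished triangle in Definition \ref{DefMf} immediately gives $m_f \circ (f \circ s_A) = 0$, so $m_f \circ f$ kills $s_A$ and hence (by the first triangle) factors through $w_A \colon A \to W_A$, an object of $\Wcal$. For (3), that same second triangle has the form $S_A[-1] \to B \to M_f \to S_A$ with $S_A \in \Scal \subseteq \Ucal$ (the inclusion being the twin condition $(\ref{Twin})$), so Lemma \ref{LemAB}(1) yields $M_f \in \C^-$ whenever $B \in \C^-$.

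For the surjectivity half of (2), given $\beta \colon B \to Y$ with $\underline{\beta}\circ\underline{f}=0$, write $\beta \circ f = h' \circ g'$ with $g' \colon A \to W$, $W \in \Wcal$, $h' \colon W \to Y$. Since $W \in \Tcal$ and $(\Scal,\Tcal)$ is a cotorsion pair, one has $\C(S_A[-1], W) = \Ext^1(S_A, W) = 0$, so $g' \circ s_A = 0$ and therefore $\beta \circ (f \circ s_A) = 0$. The long exact sequence obtained from applying $\C(-,Y)$ to the defining triangle of $m_f$ then produces $\gamma \colon M_f \to Y$ with $\gamma \circ m_f = \beta$ already in $\C$.

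The injectivity is the crux, and is where the twin hypothesis plays its essential role. Suppose $\gamma \colon M_f \to Y$ satisfies $\gamma \circ m_f = h \circ g$ with $g \colon B \to W \in \Wcal$, $h \colon W \to Y$. The same vanishing $\Ext^1(S_A, W) = 0$ gives $g \circ (f \circ s_A) = 0$, so $g$ lifts to some $\tilde g \colon M_f \to W$ with $g = \tilde g \circ m_f$. Then $(\gamma - h\tilde g) \circ m_f = 0$, so the long exact sequence yields $\gamma - h \tilde g = \delta \circ \partial$ for some $\delta \colon S_A \to Y$, where $\partial \colon M_f \to S_A$ is the connecting morphism. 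The summand $h \tilde g$ clearly factors through $W \in \Wcal$. To handle $\delta$, fix a triangle $W_Y \to Y \to V_Y[1] \to W_Y[1]$ realising $Y \in \C^+ = \Wcal \ast \Vcal[1]$; the twin condition $\Ext^1(\Scal,\Vcal)=0$ gives $\C(S_A, V_Y[1]) = 0$, so $\delta$ lifts through $W_Y \in \Wcal$. Consequently $\gamma$ factors through $\Wcal$, i.e.\ $\underline{\gamma}=0$. The delicate point is exactly this decomposition of the obstruction into two pieces, each killed by a different vanishing: the cotorsion pair axiom for $(\Scal,\Tcal)$ lets us lift $g$, while the twin compatibility $\Ext^1(\Scal,\Vcal)=0$ is exactly what is needed to dispose of the residual $\delta\circ\partial$.
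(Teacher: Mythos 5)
Your proof is correct, and parts (1) and (3) coincide with the paper's argument (the paper likewise reads off (1) from the morphism of triangles and gets (3) from Lemma \ref{LemAB}(1) applied to $S_A[-1]\to B\to M_f\to S_A$ with $S_A\in\Scal\subseteq\Ucal$). The surjectivity half of (2) is also the paper's argument in essence: both reduce to $\beta\circ f\circ s_A=0$ via $\Ext^1(S_A,\Wcal)=0$ and then apply the long exact sequence to the defining triangle of $m_f$. The one place you genuinely diverge is the injectivity half. The paper first upgrades the factorization of $x\circ m_f$ through an arbitrary $W\in\Wcal$ to a factorization through the canonical $w_Y\colon W_Y\to Y$ (the dual of Lemma \ref{LemFactor}, which uses $\Ext^1(\Ucal,\Vcal)=0$), builds a morphism of triangles into $V_Y\to W_Y\to Y\to V_Y[1]$, and kills the component $S_A\to V_Y[1]$ by the twin condition. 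You instead split $\gamma$ additively: lift $g$ through $m_f$ using $\Ext^1(S_A,W)=0$ (i.e.\ $\Ext^1(\Scal,\Tcal)=0$), so that the residue $\gamma-h\tilde g$ factors through the connecting morphism $\partial\colon M_f\to S_A$, and then dispose of the residual $\delta\colon S_A\to Y$ by lifting it through $W_Y$ via $\Ext^1(\Scal,\Vcal)=0$. Both routes are the same length and both isolate the twin condition as the decisive vanishing; yours trades the auxiliary Lemma \ref{LemFactor} (and the $(\Ucal,\Vcal)$ axiom) for the $(\Scal,\Tcal)$ axiom and a direct two-term decomposition, which makes the proof self-contained at the cost of a slightly less uniform treatment of the two halves of (2).
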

\begin{proof}
{\rm (1)} is trivial, since there is a morphism of triangles
\[
\xy
(-22,6)*+{S_A[-1]}="0";
(-7,6)*+{A}="2";
(6,6)*+{W_A}="4";
(18,6)*+{S_A}="6";
(-22,-6)*+{S_A[-1]}="10";
(-7,-6)*+{B}="12";
(6,-6)*+{M_f}="14";
(18,-6)*+{S_A}="16";
(21,-7)*+{.}="17";
{\ar@{=} "0";"10"};
{\ar_{f} "2";"12"};
{\ar_{} "4";"14"};
{\ar@{=} "6";"16"};
{\ar^{s_A} "0";"2"};
{\ar_{} "10";"12"};
{\ar^{w_A} "2";"4"};
{\ar_{m_f} "12";"14"};
{\ar "4";"6"};
{\ar "14";"16"};
{\ar@{}|\circlearrowright "0";"12"};
{\ar@{}|\circlearrowright "2";"14"};
{\ar@{}|\circlearrowright "4";"16"};
\endxy
\]
{\rm (3)} follows from Lemma \ref{LemAB}.

We show {\rm (2)}. Take a distinguished triangle
\[ V_Y\rightarrow W_Y\overset{w_Y}{\longrightarrow}Y\overset{v_Y}{\longrightarrow}V_Y[1]\quad(V_Y\in\Vcal,W_Y\in\Wcal). \]

To show the injectivity, suppose $x\in\C(M_f,Y)$ satisfies $\underline{x}\circ\underline{m}_f=0$.
By definition $x\circ m_f$ factors through some object in $\Wcal$. By (the dual of) Lemma \ref{LemFactor}, $x\circ m_f$ factors through $w_Y$, and thus we obtain a morphism of triangles
\[
\xy
(-22,6)*+{S_A[-1]}="0";
(-6,6)*+{B}="2";
(6,6)*+{M_f}="4";
(21,6)*+{S_A}="6";
(-22,-6)*+{V_Y}="10";
(-6,-6)*+{W_Y}="12";
(6,-6)*+{Y}="14";
(21,-6)*+{V_Y[1].}="16";
{\ar^{f\circ s_A} "0";"2"};
{\ar^{m_f} "2";"4"};
{\ar^{} "4";"6"};
{\ar^{} "0";"10"};
{\ar^{} "2";"12"};
{\ar^{x} "4";"14"};
{\ar^{} "6";"16"};
{\ar_{} "10";"12"};
{\ar_{w_Y} "12";"14"};
{\ar_{v_Y} "14";"16"};
{\ar@{}|\circlearrowright "0";"12"};
{\ar@{}|\circlearrowright "2";"14"};
{\ar@{}|\circlearrowright "4";"16"};
\endxy
\]
Since $\Ext^1(S_A,V_Y)=0$, $x$ factors through $W_Y$, which means $\underline{x}=0$.

To show the surjectivity, suppose $y\in\C(B,Y)$ satisfies $\underline{y}\circ\underline{f}=0$. 
By the same argument as above, we see that $y\circ f$ factors $W_Y$. This implies $y\circ f\circ s_A=0$, since $\Ext^1(S_A,W_Y)=0$. Thus $y$ factors $m_f$.
\[
\xy
(-26,0)*+{S_A[-1]}="0";
(-16,-10)*+{}="1";
(-9,0)*+{B}="2";
(8,0)*+{M_f}="4";
(20,0)*+{S_A}="6";
(0,-12)*+{Y}="-3";
(-1,4)*+{}="3";
{\ar^{f\circ s_A} "0";"2"};
{\ar^{m_f} "2";"4"};
{\ar@/_0.80pc/_{0} "0";"-3"};
{\ar_{} "4";"6"};
{\ar_{y} "2";"-3"};
{\ar^{} "4";"-3"};
{\ar@{}|\circlearrowright"-3";"3"};
{\ar@{}|\circlearrowright"1";"2"};
\endxy
\]
\end{proof}

Dually, we have the following:
\begin{rem}\label{RemLC}
For any $A\in\C$, $B\in\C^+$ and any $f\in\C(A,B)$, take a diagram
\[
\xy
(0,-12)*+{V_B[1]}="0";
(0,-2)*+{B}="2";
(0,8)*+{W_B}="4";
(-16,-2)*+{A}="6";
(-32,8)*+{L_f}="8";
(-10,-8)*+{}="10";
{\ar^{v_B} "2";"0"};
{\ar^{w_B} "4";"2"};
{\ar_{v_B\circ f} "6";"0"};
{\ar^{f} "6";"2"};
{\ar^{\ell_f} "8";"6"};
{\ar@{}|\circlearrowright "2";"10"};
\endxy
\]
where
\begin{eqnarray*}
V_B\rightarrow W_B\overset{w_B}{\longrightarrow} B\overset{v_B}{\longrightarrow} V_B[1]\\
V_B\rightarrow L_f\overset{\ell_f}{\longrightarrow} A\overset{v_B\circ f}{\longrightarrow}V_B[1]
\end{eqnarray*}
are distinguished triangles satisfying $W_B\in\Wcal, V_B\in\Vcal$.

Then, the following holds.
\begin{enumerate}
\item $\underline{f}\circ\underline{\ell}_f=0$.
\item $\ell_f$ induces a bijection
\[ \underline{\ell}_f\circ-\colon\underline{\C}(X,L_f)\overset{\cong}{\longrightarrow}\{ \alpha\in\underline{\C}(X,A)\mid\underline{f}\circ\alpha=0 \} \]
for any $X\in\C^-$.
\item If $A\in\C^+$, then $L_f\in\C^+$.
\end{enumerate}
\end{rem}

\begin{cor}\label{CorPreabel}
For any twin cotorsion pair, $\underline{\Hcal}$ is preabelian. 
\end{cor}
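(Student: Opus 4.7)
The plan is to construct a cokernel (and, dually, a kernel) in $\underline{\Hcal}$ for an arbitrary morphism $\underline{f} \in \underline{\Hcal}(A,B)$ by chaining together the constructions of this section with the reflections $\tau^{\pm}$ from the previous section. For the cokernel, I would first form $B \overset{m_f}{\to} M_f$ via Definition \ref{DefMf} (using $A \in \C^-$); since also $B \in \Hcal \subseteq \C^-$, Proposition \ref{PropMf}(3) gives $M_f \in \C^-$. Next I would form $M_f \overset{z_{M_f}}{\to} Z_{M_f}$ via Definition \ref{DefZC}; since $M_f \in \C^-$, Claim \ref{ClaimZC}(2) gives $Z_{M_f} \in \Hcal$. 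The proposed cokernel is $Z_{M_f}$ together with $\underline{z_{M_f}} \circ \underline{m_f} \in \underline{\Hcal}(B, Z_{M_f})$, which annihilates $\underline{f}$ by Proposition \ref{PropMf}(1).

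To verify the universal property, take any $Y \in \Hcal$ and any $\underline{\beta} \in \underline{\Hcal}(B,Y)$ with $\underline{\beta} \circ \underline{f} = 0$. Using $Y \in \C^+$, Proposition \ref{PropMf}(2) yields a unique $\underline{x} \in \underline{\C}(M_f,Y)$ with $\underline{x} \circ \underline{m_f} = \underline{\beta}$, and Corollary \ref{CorZC} then yields a unique $\underline{\gamma} \in \underline{\C}^+(Z_{M_f},Y)$ with $\underline{\gamma} \circ \underline{z_{M_f}} = \underline{x}$, hence $\underline{\gamma} \circ (\underline{z_{M_f}} \circ \underline{m_f}) = \underline{\beta}$. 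Because $\underline{\Hcal} \subseteq \underline{\C}^+ \subseteq \underline{\C}$ are full subcategories, $\underline{\gamma}$ is just as well a morphism in $\underline{\Hcal}(Z_{M_f}, Y)$, and uniqueness at each step gives uniqueness overall, exhibiting $Z_{M_f}$ as $\mathrm{coker}(\underline{f})$ in $\underline{\Hcal}$.

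The kernel is built by the dual recipe: since $A \in \Hcal \subseteq \C^+$, Remark \ref{RemLC}(3) gives $L_f \in \C^+$, and then Claim \ref{ClaimKC}(2) gives $K_{L_f} \in \Hcal$; the morphism $\underline{\ell_f} \circ \underline{k_{L_f}} : K_{L_f} \to A$ is the desired kernel, with universal property verified by the dual combination of Remark \ref{RemLC}(2) and Corollary \ref{CorKC}. The one thing worth watching is that the universal property of $M_f$ is available only against test objects in $\C^+$, and likewise $L_f$ only against test objects in $\C^-$; restricting to test objects $Y \in \Hcal = \C^+ \cap \C^-$ is precisely what allows both halves (the construction of $M_f$ or $L_f$, and the subsequent reflection into $\Hcal$) to mesh, so no essentially new triangulated argument is required beyond this bookkeeping.
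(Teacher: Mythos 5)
Your construction is exactly the paper's: cokernel as $Z_{M_f}$ with $\underline{z}_{M_f}\circ\underline{m}_f$ (via Proposition \ref{PropMf} and the reflection of Proposition \ref{PropZC}/Corollary \ref{CorZC}, with $Z_{M_f}\in\Hcal$ by Claim \ref{ClaimZC}), and the kernel dually as $\underline{\ell}_f\circ\underline{k}_{L_f}$. The argument and the universal-property verification by composing the two bijections match the paper's proof, so nothing further is needed.
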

\begin{proof}
First we construct a cokernel.
For any $A,B\in\mathcal{H}$ and any $f\in\C(A,B)$, let $m_f\colon B\rightarrow M_f$ be as in Definition \ref{DefMf}.
Since $A,B\in\C^-$, it follows
\[ \underline{m}_f\circ\underline{f}=0,\quad M_f\in\C^- \]
by Proposition \ref{PropMf}.
By Proposition \ref{PropZC}, there exists $z_{M_f}\colon M_f\rightarrow Z_{M_f}$ which gives a reflection $\underline{z}_{M_f}\colon M_f\rightarrow Z_{M_f}$ of $M_f$ along $\underline{\C}^+\hookrightarrow\underline{\C}$.
By Claim \ref{ClaimZC}, $Z_{M_f}$ satisfies $Z_{M_f}\in\mathcal{H}$.

Then $\underline{z}_{M_f}\circ\underline{m}_f\colon B\rightarrow Z_{M_f}$ gives a cokernel of $\underline{f}$. In fact for any $H\in\Hcal$, there is a bijection
\[ -\circ\underline{z}_{M_f}\circ\underline{m}_f\ \colon\ \underline{\C}(Z_{M_f},H)\overset{\cong}{\longrightarrow}\underline{\C}(M_f,H)\overset{\cong}{\longrightarrow}\{ \beta\in\underline{\C}(B,H)\mid \beta\circ\underline{f}=0 \}. \]
\[
\xy
(-18,0)*+{A}="2";
(0,0)*+{B}="6";
(-2,10)*+{}="7";
(14,10)*+{H}="8";
(9,-7)*+{M_f}="4";
(18,-14)*+{Z_{M_f}}="10";
{\ar "6";"4"};
{\ar_{\underline{f}} "2";"6"};
{\ar_{\underline{m}_f} "6";"4"};
{\ar_{\underline{z}_{M_f}} "4";"10"};
{\ar_{\beta} "6";"8"};
{\ar@{}|{\circlearrowright} "6";"7"};
{\ar@/^0.80pc/^{0} "2";"8"};
\endxy
\]

A kernel of $\underline{f}\in\underline{\Hcal}(A,B)$ is constructed dually. Let $L\overset{\ell_f}{\longrightarrow}A$ be as in Remark \ref{RemLC}, and let $K_{L_f}\overset{k_{L_f}}{\longrightarrow}L_f$ be as in Definition \ref{DefKC}. Then $\underline{\ell}_f\circ\underline{k}_{L_f}$ gives a kernel of $\underline{f}$.
\end{proof}

\begin{cor}\label{CorEpi1}
Let $f\in\Hcal(A,B)$ be a morphism in $\Hcal$. The following are equivalent.
\begin{enumerate}
\item $\underline{f}\in\underline{\Hcal}(A,B)$ is epimorphic.
\item $Z_{M_f}\in\Wcal$.
\item $M_f\in\Ucal$.
\end{enumerate}
\end{cor}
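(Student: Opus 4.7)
The plan is to deduce this corollary almost directly from the cokernel description built in the proof of Corollary~\ref{CorPreabel} together with Lemma~\ref{LemUZ} and Remark~\ref{TrivW}.

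First I would recall from the proof of Corollary~\ref{CorPreabel} that a cokernel of $\underline{f}\in\underline{\Hcal}(A,B)$ is given by the composite
\[
\underline{z}_{M_f}\circ\underline{m}_f\colon B\longrightarrow Z_{M_f},
\]
with $Z_{M_f}\in\Hcal$. In any additive category with zero object in which $f$ has a cokernel, $f$ is epimorphic if and only if its cokernel object is a zero object. Applying this inside $\underline{\Hcal}$, the morphism $\underline{f}$ is epimorphic if and only if $Z_{M_f}\cong 0$ in $\underline{\Hcal}$. Since $\underline{\Hcal}$ is a full subcategory of $\underline{\C}$ and $\Wcal$ is closed under direct summands, Remark~\ref{TrivW} gives $Z_{M_f}\cong 0$ in $\underline{\Hcal}$ if and only if $Z_{M_f}\in\Wcal$. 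This establishes (1)$\Leftrightarrow$(2).

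The equivalence (2)$\Leftrightarrow$(3) is then immediate by applying Lemma~\ref{LemUZ} to the object $C=M_f\in\C$: that lemma states precisely that $Z_C\in\Wcal$ if and only if $C\in\Ucal$.

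There is no real obstacle here; the only point requiring a slight check is the purely formal statement that in a preabelian category (hence in $\underline{\Hcal}$ by Corollary~\ref{CorPreabel}) a morphism is epimorphic iff its cokernel object vanishes, which follows at once from the universal property of the cokernel together with the existence of the zero morphism.
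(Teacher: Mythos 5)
Your proposal is correct and follows essentially the same route as the paper: the equivalence (1)$\Leftrightarrow$(2) comes from identifying $\mathrm{cok}\,\underline{f}$ with $Z_{M_f}$ (via Corollary \ref{CorPreabel}) together with Remark \ref{TrivW}, and (2)$\Leftrightarrow$(3) is exactly Lemma \ref{LemUZ}. The only difference is that you spell out the formal fact that a morphism in a preabelian category is epimorphic iff its cokernel object is zero, which the paper leaves implicit.
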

\begin{proof}
{\rm (1)} is equivalent to {\rm (2)}, since $\mathrm{cok} \underline{f}\cong Z_{M_f}$ in $\underline{\Hcal}$. Also {\rm (2)} is equivalent to {\rm (3)} by Lemma \ref{LemUZ}.
\end{proof}

\begin{cor}\label{CorEpi2}
Let $f\in\Hcal(A,B)$ be a morphism in $\Hcal$. If a distinguished triangle
\[ A\overset{f}{\longrightarrow}B\overset{g}{\longrightarrow}C\rightarrow A[1] \]
admits a factorization
\[
\xy
(-8,4)*+{B}="0";
(8,4)*+{C}="2";
(0,-8)*+{U}="4";
(0,6)*+{}="3";
{\ar^{g} "0";"2"};
{\ar_{b} "0";"4"};
{\ar_{c} "4";"2"};
{\ar@{}|\circlearrowright "3";"4"};
\endxy
\]
for some $U\in\Ucal$, then $\underline{f}\in\underline{\Hcal}(A,B)$ is epimorphic.
\end{cor}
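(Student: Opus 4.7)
By Corollary \ref{CorEpi1}, it suffices to show that $M_f\in\Ucal$. Since $\Ucal={}^{\perp}(\Vcal[1])$, I will verify $\C(M_f, V[1])=0$ for every $V\in\Vcal$.

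The first step is to apply the octahedral axiom to the composition $S_A[-1]\overset{s_A}{\longrightarrow}A\overset{f}{\longrightarrow}B$, using the three distinguished triangles
\[
S_A[-1]\overset{s_A}{\longrightarrow}A\overset{w_A}{\longrightarrow}W_A\to S_A,\quad A\overset{f}{\longrightarrow}B\overset{g}{\longrightarrow}C\to A[1],\quad S_A[-1]\overset{f\circ s_A}{\longrightarrow}B\overset{m_f}{\longrightarrow}M_f\to S_A.
\]
This produces a distinguished triangle
\[
W_A\longrightarrow M_f\overset{\beta}{\longrightarrow} C\longrightarrow W_A[1]
\]
with $\beta\circ m_f=g$.

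Now fix $v\in\C(M_f, V[1])$ with $V\in\Vcal$. Applying $\C(-,V[1])$ to the octahedral triangle, the term $\C(W_A, V[1])=\Ext^1(W_A, V)$ vanishes because $W_A\in\Wcal\subseteq\Ucal$. Hence $v$ lifts along $\beta$ to some $v'\in\C(C,V[1])$, i.e.\ $v=v'\circ\beta$. Feeding the hypothesized factorization $g=c\circ b$ with $b\in\C(B,U)$, $c\in\C(U,C)$, $U\in\Ucal$ into this yields
\[
v\circ m_f \;=\; v'\circ\beta\circ m_f \;=\; v'\circ g \;=\; v'\circ c\circ b,
\]
and $v'\circ c\in\C(U,V[1])=\Ext^1(U,V)=0$ since $U\in\Ucal$ and $V\in\Vcal$. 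Therefore $v\circ m_f=0$.

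Finally, applying $\C(-,V[1])$ to $S_A[-1]\overset{f\circ s_A}{\longrightarrow}B\overset{m_f}{\longrightarrow}M_f\overset{\delta}{\longrightarrow}S_A$, the vanishing $v\circ m_f=0$ forces $v$ to factor through $\delta$ as $v=v'''\circ\delta$ with $v'''\in\C(S_A,V[1])=\Ext^1(S_A,V)$. This last group is zero by the twin cotorsion pair condition $\Ext^1(\Scal,\Vcal)=0$, so $v=0$, which completes the verification that $M_f\in\Ucal$.

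The only delicate step is setting up the octahedron correctly so that the map $M_f\to C$ composes with $m_f$ to give $g$; once that compatibility is in hand, the three successive vanishings ($\Ext^1(W_A,V)=0$, $\Ext^1(U,V)=0$, $\Ext^1(S_A,V)=0$) make the diagram chase straightforward.
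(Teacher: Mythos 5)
Your proof is correct and follows essentially the same route as the paper's: the octahedron on $S_A[-1]\to A\to B$ producing the triangle $W_A\to M_f\to C\to W_A[1]$ with $\beta\circ m_f=g$, followed by the same three vanishings $\Ext^1(W_A,V)=0$, $\Ext^1(U,V)=0$, $\Ext^1(S_A,V)=0$ to conclude $M_f\in\Ucal$ and invoke Corollary \ref{CorEpi1}. No gaps.
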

\begin{proof}
By the definition of $B\overset{m_f}{\longrightarrow}M_f$, there is a commutative diagram made of distinguished triangles as follows (Definition \ref{DefMf}).
\[
\xy
(-16,16)*+{S_A[-1]}="0";
(-15,-1)*+{A}="2";
(-13.5,-17.5)*+{W_A}="4";
(-4,2)*+{B}="6";
(2.3,-5.3)*+{M_f}="8";
(16.5,8.5)*+{C}="10";
(-8.5,-7.5)*+_{_{\circlearrowright}}="12";
(-11.5,5.5)*+_{_{\circlearrowright}}="14";
(4.3,0.5)*+_{_{\circlearrowright}}="14";
{\ar_{s_A} "0";"2"};
{\ar_{w_A} "2";"4"};
{\ar^{} "0";"6"};
{\ar_{f} "2";"6"};
{\ar_{m_f} "6";"8"};
{\ar^{g} "6";"10"};
{\ar_{e} "4";"8"};
{\ar_{d} "8";"10"};
\endxy
\]
By Corollary \ref{CorEpi1}, it suffices to show $M_f\in\Ucal$.
Take any $V^{\dag}\in\Vcal$ and $v\in\C(M_f,V^{\dag}[1])$. Since $v\circ e=0$ by $\Ext^1(W_A,V^{\dag})=0$, there exists $v^{\prime}\in\C(C,V^{\dag}[1])$ satisfying $v^{\prime}\circ d=v$.
\[
\xy
(-24,0)*+{W_A}="0";
(-9,0)*+{M_f}="2";
(8,0)*+{C}="4";
(0,-12)*+{V^{\dag}[1]}="-3";
(-1,-2)*+{}="5";
(0,12)*+{B}="6";
(-1,4)*+{}="3";
{\ar^{e} "0";"2"};
{\ar^{d} "2";"4"};
{\ar_{v} "2";"-3"};
{\ar^{v^{\prime}} "4";"-3"};
{\ar_{m_f} "6";"2"};
{\ar^{g} "6";"4"};
{\ar@{}|\circlearrowright"5";"6"};
{\ar@{}|\circlearrowright"-3";"3"};
\endxy
\]
Since $g$ factors through $U\in\Ucal$, it follows $v\circ m_f=v^{\prime}\circ d\circ m_f=v^{\prime}\circ g=0$.
Thus $v$ factors through $S_A$,
\[
\xy
(-42,0)*+{S_A[-1]}="-2";
(-24,0)*+{B}="0";
(-16,-10)*+{}="1";
(-9,0)*+{M_f}="2";
(8,0)*+{S_A}="4";
(0,-12)*+{V^{\dag}[1]}="-3";
(-1,4)*+{}="3";
{\ar^{f\circ s_A} "-2";"0"};
{\ar^{m_f} "0";"2"};
{\ar@/_0.80pc/_{0} "0";"-3"};
{\ar_{} "2";"4"};
{\ar_{v} "2";"-3"};
{\ar^{} "4";"-3"};
{\ar@{}|\circlearrowright"-3";"3"};
{\ar@{}|\circlearrowright"1";"2"};
\endxy
\]
which means $v=0$, since $\Ext^1(S_A,V^{\dag})=0$.
\end{proof}
\begin{rem}
Duals of Corollary \ref{CorEpi1} and \ref{CorEpi2} also hold for monomorphisms in $\underline{\Hcal}$.
\end{rem}

\section{$\underline{\Hcal}$ is semi-abelian}

\begin{lem}\label{LemCokMorph}
Let $\beta\in\underline{\Hcal}(B,C)$ be any morphism.
If $\beta$ is a cokernel morphism, namely, if there exists a morphism $f\in\Hcal(A,B)$ such that $\beta=\mathrm{cok}\underline{f}$, 
then there exist $g\in\Hcal(B,C^{\prime})$ and an isomorphism $\eta\in\underline{\Hcal}(C,C^{\prime})$ such that
\begin{itemize}
\item[{\rm (i)}] $\eta$ is compatible with $\beta$ and $\underline{g}$,
\[
\xy
(-8,6)*+{B}="0";
(8,6)*+{C}="2";
(0,-6)*+{C^{\prime}}="4";
(0,10)*+{}="3";
{\ar^{\beta} "0";"2"};
{\ar_{\underline{g}} "0";"4"};
{\ar^{\cong}_{\eta} "2";"4"};
{\ar@{}|{\circlearrowright} "3";"4"};
\endxy
\]
\item[{\rm (ii)}] $g$ admits a distinguished triangle
\[ S[-1]\overset{s}{\longrightarrow}B\overset{g}{\longrightarrow}C^{\prime}\rightarrow S \]
with $S\in\Scal$.
\end{itemize}
\end{lem}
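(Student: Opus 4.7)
The plan is to realize the cokernel via the explicit construction from the proof of Corollary \ref{CorPreabel}, and then to extract the required triangle via the octahedral axiom. Fix $f\in\Hcal(A,B)$ with $\beta=\mathrm{cok}\,\underline{f}$. Form $m_f\colon B\to M_f$ as in Definition \ref{DefMf}, which sits in a distinguished triangle
\[ S_A[-1]\overset{f\circ s_A}{\longrightarrow} B\overset{m_f}{\longrightarrow} M_f\rightarrow S_A \qquad (S_A\in\Scal). \]
Then form $z_{M_f}\colon M_f\to Z_{M_f}$ as in Definition \ref{DefZC}, whose cone is some $S'\in\Scal$ by that construction. Set $C':=Z_{M_f}$ and $g:=z_{M_f}\circ m_f$. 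Since $A,B\in\Hcal\subseteq\C^-$, Proposition \ref{PropMf}(3) gives $M_f\in\C^-$, and then Claim \ref{ClaimZC}(2) yields $C'\in\Hcal$, so $g$ is a morphism in $\Hcal$.

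By the construction of the cokernel in Corollary \ref{CorPreabel}, $\underline{g}=\underline{z}_{M_f}\circ\underline{m}_f$ is itself a cokernel of $\underline{f}$ in $\underline{\Hcal}$. By the universal property of cokernels, there is then a unique isomorphism $\eta\in\underline{\Hcal}(C,C')$ satisfying $\eta\circ\beta=\underline{g}$, which establishes (i).

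For (ii), apply the octahedral axiom to the composition $B\overset{m_f}{\longrightarrow}M_f\overset{z_{M_f}}{\longrightarrow}C'$. Since the cones of $m_f$ and $z_{M_f}$ are $S_A\in\Scal$ and $S'\in\Scal$ respectively, the octahedron produces a distinguished triangle
\[ S_A\rightarrow\mathrm{cone}(g)\rightarrow S'\rightarrow S_A[1]. \]
Because $\Scal={}^{\perp}(\Tcal[1])$, the long exact sequence of $\Ext$ shows that $\Scal$ is closed under extensions, hence $\mathrm{cone}(g)\in\Scal$. Rotating the triangle of $g$ and setting $S:=\mathrm{cone}(g)$ produces the desired distinguished triangle $S[-1]\overset{s}{\longrightarrow} B\overset{g}{\longrightarrow}C'\rightarrow S$ with $S\in\Scal$.

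The only substantive point is the extension-closedness of $\Scal$, which is standard for the left class of a cotorsion pair; beyond that, the proof is a direct assembly of the constructions from the preceding sections, so I do not anticipate any serious obstacle.
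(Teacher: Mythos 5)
Your proposal is correct and follows essentially the same route as the paper: realize the cokernel as $\underline{z}_{M_f}\circ\underline{m}_f$, obtain $\eta$ from uniqueness of cokernels, and apply the octahedral axiom to $z_{M_f}\circ m_f$ to exhibit its cone as an extension of $S$ by $S_A$, hence in $\Scal$. Your explicit remark on the extension-closedness of $\Scal$ is a point the paper leaves implicit.
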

\begin{proof}
Take a morphism $f\in\Hcal(A,B)$ such that $\beta=\mathrm{cok}\underline{f}$. As shown in Corollary \ref{CorPreabel}, $\mathrm{cok}\underline{f}$ is given by $\underline{z}_{M_f}\circ\underline{m}_f$.
\[
\xy
(0,14)*+{S_A[-1]}="0";
(0,0)*+{A}="2";
(0,-14)*+{W_A}="4";
(14,0)*+{B}="6";
(30,-14)*+{M_f}="8";
(10,8)*+{}="10";
{\ar_{s_A} "0";"2"};
{\ar_{v_A} "2";"4"};
{\ar^{} "0";"6"};
{\ar_{f} "2";"6"};
{\ar^{m_f} "6";"8"};
{\ar@{}|\circlearrowright "2";"10"};
\endxy
\qquad
\xy
(-16,16)*+{S[-1]}="0";
(-14.8,-1)*+{U}="2";
(-13.5,-17.5)*+{T}="4";
(-4,2)*+{M_f}="6";
(2.2,-5.2)*+{Z_{M_f}}="8";
(16.5,8.5)*+{V[1]}="10";
(-8.5,-7.5)*+_{_{\circlearrowright}}="12";
(-11.5,5.5)*+_{_{\circlearrowright}}="14";
(4.3,0.5)*+_{_{\circlearrowright}}="14";
{\ar_{} "0";"2"};
{\ar_{} "2";"4"};
{\ar^{} "0";"6"};
{\ar_{} "2";"6"};
{\ar_{z_{M_f}} "6";"8"};
{\ar^{} "6";"10"};
{\ar_{} "4";"8"};
{\ar_{} "8";"10"};
\endxy
\]
Thus there exists an isomorphism $\eta\in\underline{\Hcal}(C,Z_{M_f})$ compatible with $\underline{z}_{M_f}\circ\underline{m}_f$ and $\beta$.
It suffices to show $g=z_{M_f}\circ m_f$ satisfies condition {\rm (ii)}. 
If we complete $g$ into a distinguished triangle
\[ Q[-1]\rightarrow B\overset{z_{M_f}\circ m_f}{\longrightarrow}Z_{M_f}\rightarrow Q, \]
then by the octahedral axiom, we have a distinguished triangle
\[ S_A\rightarrow Q\rightarrow S\rightarrow S_A[1], \]
which implies $Q\in\Scal$.
\[
\xy
(-16,16)*+{B}="0";
(-14.6,-1)*+{M_f}="2";
(-13.5,-17.5)*+{S_A}="4";
(-4,2)*+{Z_{M_f}}="6";
(2,-5.1)*+{Q}="8";
(16.7,8.5)*+{S}="10";
(-8.5,-7.5)*+_{_{\circlearrowright}}="12";
(-11.5,5.5)*+_{_{\circlearrowright}}="14";
(4.3,0.5)*+_{_{\circlearrowright}}="14";
{\ar_{m_f} "0";"2"};
{\ar_{} "2";"4"};
{\ar^{} "0";"6"};
{\ar_{z_{M_f}} "2";"6"};
{\ar_{} "6";"8"};
{\ar^{} "6";"10"};
{\ar_{} "4";"8"};
{\ar_{} "8";"10"};
\endxy
\]

\end{proof}


\begin{lem}\label{Lem-Epim}
Suppose $X\in\C^-,B\in\Hcal$ and $x\in\C(X,B)$ admit a distinguished triangle
\[ X\overset{x}{\longrightarrow}B\rightarrow U\rightarrow X[1] \]
with some $U\in\Ucal$. Then, the unique morphism $\zeta\in\underline{\Hcal}(Z_X,B)$ satisfying $\zeta\circ\underline{z}_X=\underline{x}$ $($Proposition \ref{PropZC}$)$ becomes epimorphic.
\end{lem}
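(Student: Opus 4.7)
The strategy is to lift $\zeta$ to a morphism $\tilde{\zeta}\in\C(Z_X,B)$ satisfying the strict equality $\tilde{\zeta}\circ z_X=x$ in $\C$ itself, complete $\tilde{\zeta}$ to a distinguished triangle, and then use the octahedral axiom to factor the cokernel-side morphism of that triangle through the given $U\in\Ucal$, so that Corollary \ref{CorEpi2} applies.

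For the lift, consider the triangle $S[-1]\overset{a\circ b}{\longrightarrow}X\overset{z_X}{\longrightarrow}Z_X\rightarrow S$ of Definition \ref{DefZC}; applying $\C(-,B)$ reduces the existence of $\tilde{\zeta}$ with $\tilde{\zeta}\circ z_X=x$ to the vanishing of $x\circ a\circ b\in\C(S[-1],B)$. The composite $a\circ b$ factors as $S[-1]\overset{b}{\longrightarrow}U_0\overset{a}{\longrightarrow}X$ through the object $U_0\in\Ucal$ of that definition. Since $B\in\Hcal\subseteq\C^+=\Wcal\ast\Vcal[1]$, fix a distinguished triangle $W_B\rightarrow B\rightarrow V_B[1]\rightarrow W_B[1]$ with $W_B\in\Wcal$ and $V_B\in\Vcal$. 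Then $\Ext^1(U_0,V_B)=0$ from the cotorsion pair $(\Ucal,\Vcal)$ forces $x\circ a\colon U_0\rightarrow B$ to factor as $U_0\overset{\sigma}{\longrightarrow}W_B\overset{\rho}{\longrightarrow}B$; consequently $x\circ a\circ b=\rho\circ(\sigma\circ b)$, and $\sigma\circ b\in\C(S[-1],W_B)=\Ext^1(S,W_B)=0$ because $W_B\in\Tcal$ and $(\Scal,\Tcal)$ is a cotorsion pair. Hence $x\circ a\circ b=0$, a lift $\tilde{\zeta}$ exists, and by the uniqueness part of Proposition \ref{PropZC} we get $\underline{\tilde{\zeta}}=\zeta$ in $\underline{\Hcal}(Z_X,B)$.

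Now complete $\tilde{\zeta}$ to a distinguished triangle $Z_X\overset{\tilde{\zeta}}{\longrightarrow}B\overset{\delta}{\longrightarrow}C'\rightarrow Z_X[1]$ and apply the octahedral axiom to the factorization $x=\tilde{\zeta}\circ z_X$, using the triangles for $z_X$ (with cone $S\in\Scal$) and for $x$ (with cone the given $U\in\Ucal$). The octahedron produces a fourth distinguished triangle $S\rightarrow U\overset{\epsilon}{\longrightarrow}C'\rightarrow S[1]$ together with the commutative relation $\delta=\epsilon\circ\beta$, where $\beta\colon B\rightarrow U$ is the second morphism of the given triangle $X\overset{x}{\longrightarrow}B\overset{\beta}{\longrightarrow}U\rightarrow X[1]$. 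Since $\delta$ factors through $U\in\Ucal$, Corollary \ref{CorEpi2} applied to $\tilde{\zeta}\in\Hcal(Z_X,B)$ concludes that $\zeta=\underline{\tilde{\zeta}}$ is epimorphic in $\underline{\Hcal}$.

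The main obstacle is the first step: a priori Proposition \ref{PropZC} yields only a factorization of $\underline{x}$ through $\underline{z}_X$ modulo $\Wcal$, whereas the octahedral argument demands the strict equality $\tilde{\zeta}\circ z_X=x$ in $\C$. Upgrading the quotient factorization to a genuine one uses both cotorsion pair conditions $\Ext^1(\Ucal,\Vcal)=0$ and $\Ext^1(\Scal,\Tcal)=0$ simultaneously, and is the essential calculation underlying the lemma.
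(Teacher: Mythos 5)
Your proof is correct and follows essentially the same route as the paper: lift $\zeta$ to a genuine morphism $\tilde{\zeta}$ with $\tilde{\zeta}\circ z_X=x$ in $\C$ (the paper simply cites the surjectivity argument of Proposition \ref{PropZC}, which is exactly the $\Ext^1(\Ucal,\Vcal)=0$ and $\Ext^1(\Scal,\Tcal)=0$ computation you carry out), complete it to a triangle, show the cone-side morphism factors through $U$, and invoke Corollary \ref{CorEpi2}. The only cosmetic difference is that you obtain the factorization of $\delta$ through $U$ via the octahedral axiom, whereas the paper gets it more directly from $\delta\circ\tilde{\zeta}=0\Rightarrow\delta\circ x=0$ and the triangle $X\overset{x}{\longrightarrow}B\rightarrow U\rightarrow X[1]$; both are valid.
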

\begin{proof}
As in Proposition \ref{PropZC} (or, dual of the proof of Proposition \ref{PropKC}), we see that there exists $b\in\C(Z_X,B)$ satisfying $\zeta=\underline{b}$ and $b\circ z_X=x$. If we complete $b$ into a distinguished triangle
\[ C[-1]\rightarrow Z_X\overset{b}{\longrightarrow}B\overset{c}{\longrightarrow}C, \]
then $c$ factors through $U$.
\[
\xy
(-12,8)*+{X}="0";
(-12,-8)*+{Z_X}="2";
(-16,0)*+{}="3";
(0,0)*+{B}="4";
(16,0)*+{}="5";
(12,8)*+{C}="6";
(12,-8)*+{U}="8";
{\ar_{z_X} "0";"2"};
{\ar^{x} "0";"4"};
{\ar_{b} "2";"4"};
{\ar^{c} "4";"6"};
{\ar^{} "4";"8"};
{\ar_{} "8";"6"};
{\ar@{}|\circlearrowright "3";"4"};
{\ar@{}|\circlearrowright "4";"5"};
\endxy
\]
Thus Lemma \ref{Lem-Epim} follows from Corollary \ref{CorEpi2}.
\end{proof}

\begin{lem}\label{LemPullBack}
Let
\begin{equation}\label{PullBackPre}
\xy
(-6,6)*+{A}="0";
(6,6)*+{B}="2";
(-6,-6)*+{C}="4";
(6,-6)*+{D}="6";
(0,0)*+{\square}="5";
{\ar^{\alpha} "0";"2"};
{\ar_{\beta} "0";"4"};
{\ar^{\gamma} "2";"6"};
{\ar_{\delta} "4";"6"};
\endxy
\end{equation}
be a pullback diagram in $\underline{\Hcal}$. If there exist $X\in\C^-, x_B\in\C(X,B), x_C\in\C(X,C)$ which satisfies the following conditions, then $\alpha$ is epimorphic.
\begin{itemize}
\item[{\rm (i)}] The following diagram is commutative.
\[
\xy
(-6,6)*+{X}="0";
(6,6)*+{B}="2";
(-6,-6)*+{C}="4";
(6,-6)*+{D}="6";
{\ar^{\underline{x}_B} "0";"2"};
{\ar_{\underline{x}_C} "0";"4"};
{\ar^{\gamma} "2";"6"};
{\ar_{\delta} "4";"6"};
{\ar@{}|\circlearrowright "0";"6"};
\endxy
\]
\item[{\rm (ii)}] There exists a distinguished triangle $X\overset{x_B}{\longrightarrow}B\rightarrow U\rightarrow X[1]$ with $U\in\Ucal$.
\end{itemize}
\end{lem}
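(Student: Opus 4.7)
The plan is to reduce the problem to Lemma \ref{Lem-Epim} by replacing the external object $X\in\C^-$ with its reflection $Z_X\in\Hcal$, and then using the universal property of the pullback in $\underline{\Hcal}$ to produce a factorization of $\alpha$ through an epimorphism.

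First, since $X\in\C^-$, Claim \ref{ClaimZC} gives $Z_X\in\Hcal$. By Proposition \ref{PropZC} (applied with $Y=B$ and $Y=C$, both in $\Hcal\subseteq\C^+$), there exist unique morphisms $\zeta_B\in\underline{\Hcal}(Z_X,B)$ and $\zeta_C\in\underline{\Hcal}(Z_X,C)$ such that
\[ \zeta_B\circ\underline{z}_X=\underline{x}_B\quad\text{and}\quad\zeta_C\circ\underline{z}_X=\underline{x}_C. \]
Condition (i) yields $\gamma\circ\zeta_B\circ\underline{z}_X=\delta\circ\zeta_C\circ\underline{z}_X$, and the uniqueness part of Proposition \ref{PropZC} (with $Y=D$) forces $\gamma\circ\zeta_B=\delta\circ\zeta_C$ in $\underline{\Hcal}$.

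Next I invoke the pullback property of $(\ref{PullBackPre})$ in $\underline{\Hcal}$: there exists $\psi\in\underline{\Hcal}(Z_X,A)$ with $\alpha\circ\psi=\zeta_B$ (and $\beta\circ\psi=\zeta_C$, which we will not need). The key observation is that condition (ii) is exactly the hypothesis of Lemma \ref{Lem-Epim} applied to $x_B\colon X\to B$; hence $\zeta_B$ is epimorphic in $\underline{\Hcal}$. But then the factorization $\zeta_B=\alpha\circ\psi$ forces $\alpha$ itself to be epimorphic, completing the proof.

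There is no real obstacle here beyond carefully arranging the reflection step: the only subtle point is justifying that the equality $\gamma\circ\zeta_B\circ\underline{z}_X=\delta\circ\zeta_C\circ\underline{z}_X$ can be cancelled through $\underline{z}_X$, which is precisely the bijectivity assertion in Proposition \ref{PropZC} guaranteeing that morphisms out of $Z_X$ into $C^+$-objects are determined by their precomposition with $\underline{z}_X$.
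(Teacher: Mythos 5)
Your proposal is correct and follows essentially the same route as the paper: pass to the reflection $Z_X\in\Hcal$ via Proposition \ref{PropZC}, cancel $\underline{z}_X$ to get $\gamma\circ\zeta_B=\delta\circ\zeta_C$, factor through the pullback, and conclude from Lemma \ref{Lem-Epim} that $\zeta_B$, hence $\alpha$, is epimorphic. No gaps.
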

\begin{proof}
Take $X\overset{z_X}{\longrightarrow}Z_X$ as in Definition \ref{DefZC}. By the adjointness, there exist $\zeta_B\in\underline{\Hcal}(Z_X,B)$ and $\zeta_C\in\underline{\Hcal}(Z_X,C)$ satisfying
\[ \zeta_B\circ\underline{z}_X=\underline{x}_B,\ \ \zeta_C\circ\underline{z}_X=\underline{x}_C. \] By Lemma \ref{Lem-Epim}, $\zeta_B$ is epimorphic.
From $\gamma\circ\underline{x}_B=\delta\circ\underline{x}_C$, it follows $\gamma\circ\zeta_B=\delta\circ\zeta_C$.
\[
\xy
(-12,12)*+{X}="0";
(10,6)*+{B}="2";
(-6,-10)*+{C}="4";
(-4,4)*+{}="5";
(10,-10)*+{D}="6";
(-1,14)*+{}="7";
(-2,2)*+{Z_X}="8";
(-14,1)*+{}="9";
{\ar@/^0.40pc/^{\underline{x}_B} "0";"2"};
{\ar@/_0.40pc/_{\underline{x}_C} "0";"4"};
{\ar^{\underline{z}_X} "0";"8"};
{\ar@/^0.20pc/_{\zeta_B} "8";"2"};
{\ar@/_0.20pc/^{\zeta_C} "8";"4"};
{\ar^{\gamma} "2";"6"};
{\ar_{\delta} "4";"6"};
{\ar@{}|\circlearrowright "5";"6"};
{\ar@{}|\circlearrowright "7";"8"};
{\ar@{}|\circlearrowright "8";"9"};
\endxy \]
Since $(\ref{PullBackPre})$ is a pullback diagram in $\underline{\Hcal}$, there exists $\zeta\in\underline{\Hcal}(Z_X,A)$ which satisfies $\alpha\circ\zeta=\zeta_B$ and $\beta\circ\zeta=\zeta_C$. 
\[
\xy
(-14,14)*+{Z_X}="-2";
(-2,14)*+{}="-1";
(-6,6)*+{A}="0";
(-14,2)*+{}="1";
(6,6)*+{B}="2";
(-6,-6)*+{C}="4";
(6,-6)*+{D}="6";
(0,0)*+{\square}="5";
{\ar_{\alpha} "0";"2"};
{\ar^{\beta} "0";"4"};
{\ar^{\gamma} "2";"6"};
{\ar_{\delta} "4";"6"};
{\ar_{\zeta} "-2";"0"};
{\ar@/^0.60pc/^{\zeta_B} "-2";"2"};
{\ar@/_0.60pc/_{\zeta_C} "-2";"4"};
{\ar@{}|\circlearrowright "0";"1"};
{\ar@{}|\circlearrowright "0";"-1"};
\endxy
\]
Since $\zeta_B$ is epimorphic, $\alpha$ is also an epimorphism.
\end{proof}

\begin{thm}\label{ThmSemiAbel}
For any twin cotorsion pair, $\underline{\Hcal}$ is semi-abelian.
\end{thm}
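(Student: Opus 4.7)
The plan is to verify the left and right semi-abelian conditions separately; since the hypotheses of a twin cotorsion pair and the definition of $\underline{\Hcal}$ are self-dual, only the left version needs a direct argument. So assume a pullback diagram $(\ref{PullBackDiag})$ in $\underline{\Hcal}$ in which $\delta\in\underline{\Hcal}(C,D)$ is a cokernel morphism, and aim to apply Lemma \ref{LemPullBack}.

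First, invoke Lemma \ref{LemCokMorph} on $\delta$: after replacing $D$ by an isomorphic object in $\underline{\Hcal}$ (which does not affect the pullback), we may assume $\delta=\underline{g}$ for some $g\in\C(C,D)$ fitting into a distinguished triangle
\[ S[-1]\overset{s}{\longrightarrow}C\overset{g}{\longrightarrow}D\rightarrow S \qquad (S\in\Scal). \]
Next, pick any representative $h\in\C(B,D)$ of $\gamma\in\underline{\Hcal}(B,D)$, and complete the composition $B\overset{h}{\longrightarrow}D\rightarrow S$ into a distinguished triangle
\[ X\overset{x_B}{\longrightarrow}B\longrightarrow S\longrightarrow X[1]. \]
Since $B\in\Hcal\subseteq\C^-$ and $S\in\Scal$, Lemma \ref{LemAB} (2) applied to the rotated triangle $S[-1]\rightarrow X\rightarrow B\rightarrow S$ yields $X\in\C^-$.

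Now axiom (TR3) extends $h$ to a morphism of distinguished triangles
\[
\xy
(-22,6)*+{S[-1]}="0";
(-7,6)*+{X}="2";
(6,6)*+{B}="4";
(18,6)*+{S}="6";
(-22,-6)*+{S[-1]}="10";
(-7,-6)*+{C}="12";
(6,-6)*+{D}="14";
(18,-6)*+{S,}="16";
{\ar@{=} "0";"10"};
{\ar^{x_C} "2";"12"};
{\ar^{h} "4";"14"};
{\ar@{=} "6";"16"};
{\ar "0";"2"}; {\ar^{x_B} "2";"4"}; {\ar "4";"6"};
{\ar "10";"12"}; {\ar_{g} "12";"14"}; {\ar "14";"16"};
\endxy
\]
producing $x_C\in\C(X,C)$ with $g\circ x_C=h\circ x_B$. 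Reducing modulo $\Wcal$ gives $\delta\circ\underline{x}_C=\gamma\circ\underline{x}_B$, so condition (i) of Lemma \ref{LemPullBack} holds. Moreover, since $\Scal\subseteq\Ucal$ (Definition \ref{DefTwin}), the triangle $X\rightarrow B\rightarrow S\rightarrow X[1]$ witnesses condition (ii) with $U=S\in\Ucal$. Lemma \ref{LemPullBack} then forces $\alpha$ to be epimorphic, establishing the left semi-abelian property. The right semi-abelian property follows by the dual argument applied to a pushout diagram in which the top map is a kernel morphism, using the evident duals of Lemmas \ref{LemCokMorph}, \ref{Lem-Epim}, and \ref{LemPullBack}.

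The main obstacle is the choice of the auxiliary object $X$: it must simultaneously lie in $\C^-$ (so that the adjointness machinery of Section 3 applies via Lemma \ref{LemPullBack}) and sit in a triangle $X\rightarrow B\rightarrow U\rightarrow X[1]$ whose third term lies in $\Ucal$. Both requirements are handled at once by the pivotal choice of completing against the object $S\in\Scal$ supplied by Lemma \ref{LemCokMorph}, exploiting the twin-pair inclusion $\Scal\subseteq\Ucal$ for (ii) and Lemma \ref{LemAB}(2) for $X\in\C^-$.
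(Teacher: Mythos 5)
Your proof is correct and follows essentially the same route as the paper: reduce to the left semi-abelian case by duality, use Lemma \ref{LemCokMorph} to realize $\delta$ by a morphism fitting in a triangle over some $S\in\Scal$, complete $s\circ h$ to obtain $X\overset{x_B}{\longrightarrow}B\rightarrow S\rightarrow X[1]$, get $x_C$ by (TR3) and $X\in\C^-$ by Lemma \ref{LemAB}(2), then conclude via Lemma \ref{LemPullBack} using $S\in\Scal\subseteq\Ucal$. Your explicit remark that condition (ii) of Lemma \ref{LemPullBack} is met precisely because of the twin-pair inclusion $\Scal\subseteq\Ucal$ is a point the paper leaves implicit, but the argument is the same.
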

\begin{proof}
By duality, we only show $\underline{\Hcal}$ is {\it left} semi-abelian. Assume we are given a pullback diagram
\begin{equation}
\label{PullBackLast}
\xy
(-6,6)*+{A}="0";
(6,6)*+{B}="2";
(-6,-6)*+{C}="4";
(6,-6)*+{D}="6";
(0,0)*+{\square}="5";
{\ar^{\alpha} "0";"2"};
{\ar_{\beta} "0";"4"};
{\ar^{\gamma} "2";"6"};
{\ar_{\delta} "4";"6"};
\endxy
\end{equation}
in $\underline{\Hcal}$, where $\delta$ is a cokernel morphism. It suffices to show $\alpha$ becomes epimorphic.

By Lemma \ref{LemCokMorph}, replacing $D$ by an isomorphic one if necessary, we may assume there exists $d\in\Hcal(C,D)$ satisfying $\delta=\underline{d}$, which admits a distinguished triangle
\[ S[-1]\rightarrow C\overset{d}{\longrightarrow}D\overset{s}{\longrightarrow}S \]
with $S\in\Scal$.
If we take $c\in\Hcal(B,D)$ satisfying $\gamma=\underline{c}$, and complete $s\circ c$ into a distinguished triangle
\[ S[-1]\rightarrow X\overset{x_B}{\longrightarrow}B\overset{s\circ c}{\longrightarrow}S, \]
then $c\circ x_B$ factors through $d$. In fact, there exists $x_C\in\C(X,C)$ which gives a morphism of triangles as follows.
\[
\xy
(-22,6)*+{S[-1]}="0";
(-6,6)*+{X}="2";
(6,6)*+{B}="4";
(18,6)*+{S}="6";
(-22,-6)*+{S[-1]}="10";
(-6,-6)*+{C}="12";
(6,-6)*+{D}="14";
(18,-6)*+{S}="16";
{\ar^{} "0";"2"};
{\ar^{x_B} "2";"4"};
{\ar^{s\circ c} "4";"6"};
{\ar@{=} "0";"10"};
{\ar_{x_C} "2";"12"};
{\ar^{c} "4";"14"};
{\ar@{=} "6";"16"};
{\ar_{} "10";"12"};
{\ar_{d} "12";"14"};
{\ar_{s} "14";"16"};
{\ar@{}|\circlearrowright "0";"12"};
{\ar@{}|\circlearrowright "2";"14"};
{\ar@{}|\circlearrowright "4";"16"};
\endxy
\]
By Lemma \ref{LemAB}, we have $X\in\C^-$. Thus $\alpha$ becomes epimorphic by Lemma \ref{LemPullBack}.
\end{proof}

\section{The case where $\underline{\Hcal}$ becomes integral}

In the rest, additionally we assume that $(\Scal,\Tcal),(\Ucal,\Vcal)$ satisfies
\begin{equation}\label{Double}
\Ucal\subseteq\Scal\ast\Tcal\quad \text{or}\quad \Tcal\subseteq\Ucal\ast\Vcal.
\end{equation}
This condition is satisfied, for example in the following cases.
\begin{ex}\label{ExDouble}
A twin cotorsion pair $(\Scal,\Tcal),(\Ucal,\Vcal)$ satisfies $(\ref{Double})$ in the following cases.
\begin{enumerate}
\item $\Ucal=\Scal$. Namely, $(\Scal,\Tcal)=(\Ucal,\Vcal)$ is a single cotorsion pair.
\item $\Ucal=\Tcal$. For example, Buan and Marsh's triplet $(\add (T)[1],\Xcal_T,(\Xcal_T)^{\perp}[-1])$.
\item $(\Scal,\Tcal)$ is a co-$t$-structure. In this case, $\Scal\ast\Tcal=\C$.
\item $(\Ucal,\Vcal)$ is a co-$t$-structure. In this case, $\Ucal\ast\Vcal=\C$.
\end{enumerate}
\end{ex}

Remark that the following holds.
\begin{fact}\label{LRintegral}$($\cite{R}$)$ A semi-abelian category $\A$ is left integral if and only if $\A$ is right integral.
\end{fact}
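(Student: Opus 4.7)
This is a purely categorical fact about semi-abelian categories, due to Rump, and the triangulated context plays no role. My plan is as follows.

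First I would observe that the statement is self-dual: $\A$ is semi-abelian iff $\A^{\mathrm{op}}$ is, and $\A$ is left integral iff $\A^{\mathrm{op}}$ is right integral. So it suffices to prove one direction; I choose to show that semi-abelian plus left integral implies right integral. I would then fix a pushout square with morphisms $\alpha\colon A\to B$, $\beta\colon A\to C$, $\gamma\colon B\to D$, $\delta\colon C\to D$, with $\alpha$ monomorphic, and aim to show $\delta$ is monomorphic. The main tool is the canonical preabelian factorization of each morphism $f$ as $f=\iota_f\circ\bar f\circ\pi_f$, where $\pi_f$ is a cokernel morphism (coimage projection), $\iota_f$ is a kernel morphism (image inclusion), and $\bar f$ is the induced comparison map; in the semi-abelian setting $\bar f$ is always a bimorphism.

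Applying this to $\alpha$, which has $\pi_\alpha=\mathrm{id}$ since $\alpha$ is mono, yields $\alpha=\iota_\alpha\circ\bar\alpha$. Pasting the original pushout as two successive pushouts, one along $\bar\alpha$ and one along $\iota_\alpha$, gives a factorization $\delta=\delta_2\circ\delta_1$, where $\delta_1$ is the pushout of $\bar\alpha$ along $\beta$ and $\delta_2$ is the pushout of $\iota_\alpha$ along the corresponding transported map. Right semi-abelianness (part of the hypothesis) immediately makes $\delta_2$ monomorphic, since $\iota_\alpha$ is a kernel morphism. It remains to show that $\delta_1$ is monomorphic; this is where the left integral hypothesis enters. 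My plan is to form the kernel of $\delta_1$ and, using the universal properties of the pushout together with the mono-epi decomposition of $\bar\alpha$, realize an auxiliary pullback diagram one of whose edges is the epimorphic half of $\bar\alpha$. Left integrality then forces the parallel edge of that pullback to be epi as well, and combined with the defining zero condition of the kernel this forces the kernel of $\delta_1$ to vanish.

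The main obstacle is the third-paragraph construction: producing the correct pullback square so that left integrality can be invoked in a nontrivial way. This requires a careful diagram-chase interleaving the pushout, the canonical factorization of $\alpha$, and the kernel of $\delta_1$, and it is here that the full strength of semi-abelianness (not just left semi-abelianness) is used, because one needs both the existence of the canonical factorization and the bimorphic character of $\bar\alpha$. Once that pullback is in place, the conclusion follows formally. This is essentially Rump's original argument, and no features specific to the triangulated category play a role — the statement is purely structural.
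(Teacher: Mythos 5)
The paper offers no proof of this statement at all: it is recorded as a Fact and attributed to Rump's paper [R], so there is no internal argument to compare yours against, and your sketch has to stand on its own. Its skeleton is sound and is indeed essentially Rump's: reduce by duality to showing that semi-abelian plus left integral implies right integral; factor the monomorphism $\alpha$ as $\iota_\alpha\circ\bar\alpha$ with $\iota_\alpha=\ker(\mathrm{cok}\,\alpha)$ a kernel morphism and $\bar\alpha\colon A\to I$ a bimorphism (this is where semi-abelianness is needed); paste the pushout into two squares; and kill the $\iota_\alpha$-square by right semi-abelianness. What remains is exactly the claim that the pushout $\delta_1\colon C\to E$ of the bimorphism $\bar\alpha$ along $\beta$ is monomorphic, and this is the only place left integrality can enter.

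That remaining claim is precisely the step you do not carry out: you label it ``the main obstacle'' and promise an auxiliary pullback ``one of whose edges is the epimorphic half of $\bar\alpha$'', which does not parse ($\bar\alpha$ is itself both mono and epi; it has no halves), and the pullback is never exhibited. Here is the missing construction. The first pushout is the cokernel $\psi=(\beta',\delta_1)\colon I\oplus C\to E$ of the monomorphism $\phi=(\bar\alpha,-\beta)\colon A\to I\oplus C$. Put $P=\ker\psi$, i.e.\ the pullback of $\beta'$ and $\delta_1$, with projections $p_I,p_C$; since $\phi$ is mono, its canonical factorization is $\phi=\iota_\phi\circ\overline{\phi}$ with $\iota_\phi=\ker(\mathrm{cok}\,\phi)=\ker\psi$, so the comparison map $u=\overline{\phi}\colon A\to P$ is epimorphic (and monomorphic) by semi-abelianness. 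If $k\colon K\to C$ satisfies $\delta_1\circ k=0$, then $(0,k)$ factors through $P$ via some $v\colon K\to P$ with $p_I\circ v=0$ and $p_C\circ v=k$. Pulling back $u$ along $v$ gives $Q$ with $q_A\colon Q\to A$ and $q_K\colon Q\to K$, and left integrality makes $q_K$ epimorphic because $u$ is. Now $\bar\alpha\circ q_A=p_I\circ u\circ q_A=p_I\circ v\circ q_K=0$ forces $q_A=0$ since $\bar\alpha$ is mono; hence $k\circ q_K=p_C\circ v\circ q_K=p_C\circ u\circ q_A=0$, and $q_K$ epi gives $k=0$, i.e.\ $\ker\delta_1=0$ and $\delta_1$ is mono. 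Without this (or an equivalent) diagram chase your proposal is an outline rather than a proof: the one step where the hypothesis of left integrality actually does work is exactly the step left blank.
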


\begin{thm}\label{ThmIntegral}
If a twin cotorsion pair $\Twin$ satisfies $(\ref{Double})$, then $\underline{\Hcal}$ becomes integral.
\end{thm}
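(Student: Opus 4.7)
The plan is to apply Fact \ref{LRintegral} together with Theorem \ref{ThmSemiAbel} to reduce to showing $\underline{\Hcal}$ is left integral, and then to verify the hypotheses of Lemma \ref{LemPullBack}. By duality I may assume the first alternative of $(\ref{Double})$, namely $\Ucal\subseteq\Scal\ast\Tcal$. Given a pullback square in $\underline{\Hcal}$ with $\delta$ epimorphic, pick lifts $c\in\Hcal(B,D)$ and $d\in\Hcal(C,D)$ of $\gamma$ and $\delta$; the goal is to produce $X\in\C^-$, morphisms $x_B\colon X\to B$ and $x_C\colon X\to C$, and a distinguished triangle $X\xrightarrow{x_B}B\to U\to X[1]$ with $U\in\Ucal$, satisfying $c\circ x_B=d\circ x_C$.

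Since $\delta=\underline{d}$ is epimorphic, Corollary \ref{CorEpi1} yields $M_d\in\Ucal$. Completing $m_d\circ c\colon B\to M_d$ into a distinguished triangle $Y\xrightarrow{y_B}B\xrightarrow{m_d\circ c}M_d\to Y[1]$ produces a natural candidate: since $m_d\circ c\circ y_B=0$, the morphism $c\circ y_B$ factors through $d\circ s_C\colon S_C[-1]\to D$ (the map in the defining triangle $S_C[-1]\to D\xrightarrow{m_d}M_d\to S_C$), yielding $z\colon Y\to S_C[-1]$ and thus $y_C:=s_C\circ z\colon Y\to C$ with $c\circ y_B=d\circ y_C$. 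This data satisfies conditions {\rm (i)} and {\rm (ii)} of Lemma \ref{LemPullBack} with $U=M_d$, except we still have to establish $Y\in\C^-$.

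The main obstacle is forcing $Y\in\C^-$, and this is where the hypothesis $\Ucal\subseteq\Scal\ast\Tcal$ enters. I would decompose $M_d$ via a distinguished triangle $S\xrightarrow{\sigma}M_d\xrightarrow{\tau}T\to S[1]$ with $S\in\Scal, T\in\Tcal$, and consider $g=\tau\circ m_d\circ c\colon B\to T$ together with its fiber triangle $X\to B\xrightarrow{g}T\to X[1]$. The octahedral axiom applied to $B\xrightarrow{m_d\circ c}M_d\xrightarrow{\tau}T$ produces a distinguished triangle $S[-1]\to Y\to X\to S$, and by Lemma \ref{LemAB} {\rm (2)}, membership $X\in\C^-$ then propagates to $Y\in\C^-$.

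The technical heart of the proof is thus to establish $X\in\C^-$. I would exploit $\Ext^1(\Scal,\Tcal)=0$ to force $g$ to factor as $g=\lambda\circ w_B$ through the $\Wcal$-part of the decomposition $S_B[-1]\to B\xrightarrow{w_B}W_B\to S_B$ of $B\in\C^-$, obtaining $\lambda\colon W_B\to T$. A second octahedral axiom applied to $B\xrightarrow{w_B}W_B\xrightarrow{\lambda}T$ then exhibits $X$ in a distinguished triangle $S_B[-1]\to X\to F\to S_B$ with $F$ the fiber of $\lambda$, reducing everything to verifying $F\in\Wcal$. This last step, which must use the particular form of $T$ arising from $M_d\in\Ucal$ together with the hypothesis $\Ucal\subseteq\Scal\ast\Tcal$, is where the most delicate long exact sequence bookkeeping enters and is the chief technical obstacle.
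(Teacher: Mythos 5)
Your setup is right: the reduction to left integrality via Fact \ref{LRintegral} and Theorem \ref{ThmSemiAbel}, the use of Corollary \ref{CorEpi1} to get $M_d\in\Ucal$, the plan to feed data into Lemma \ref{LemPullBack}, and the construction of $y_C$ from the factorization of $c\circ y_B$ through the fibre $S_C[-1]\xrightarrow{d\circ s_C}D$ of $m_d$ are all correct. The gap is exactly the step you flag at the end, and it is not ``delicate bookkeeping'' but the point where your route breaks down. You must show that $F=\mathrm{fib}(\lambda\colon W_B\to T)$ lies in $\Wcal$ (or at least in $\C^-$), where $S\to M_d\xrightarrow{\tau}T\to S[1]$ is your decomposition of $M_d$. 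From the triangle $T[-1]\to F\to W_B\xrightarrow{\lambda}T$ one only gets $\C(F,V[1])\hookrightarrow\C(T,V[2])$ and that $\C(S'[-1],F)$ is a quotient of $\C(S',T)$ for $S'\in\Scal$; neither group vanishes in general, so neither $F\in\Ucal$ nor $F\in\Tcal$ is available. In the extreme case $W_B=0$ one has $F=T[-1]$, which lies in $\Wcal$ or $\C^-$ only in degenerate situations. None of Lemmas \ref{LemAB}, \ref{LemABDual} applies to a fibre over an object of $\Tcal$, so your intermediate $X=\mathrm{fib}(B\to T)$, and hence $Y=\mathrm{fib}(m_d\circ c)$, cannot be placed in $\C^-$ along this route.

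The paper's proof avoids ever taking the fibre of the whole map $B\to M_d$. Instead it precomposes with $s_B\colon S_B[-1]\to B$: since $\C(S_B[-1],T)=\Ext^1(S_B,T)=0$, the composite $m_d\circ c\circ s_B$ factors through $S\to M_d$ via some $g\colon S_B[-1]\to S$, and one sets $X=\mathrm{fib}(g)$. This $X$ is an extension of $S_B[-1]$ by $S[-1]$, hence lies in $\Scal[-1]\subseteq\C^-$ with no further argument; the octahedron for $X\xrightarrow{f_B}S_B[-1]\xrightarrow{s_B}B$ exhibits the cone $Q$ of $x_B=s_B\circ f_B$ as an extension of $W_B$ by $S$, hence $Q\in\Ucal$, and $x_C$ is obtained from $m_d\circ c\circ x_B=0$ exactly as in your argument for $y_C$. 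So the essential idea you are missing is to restrict along $S_B[-1]$ \emph{before} taking the fibre, so that membership in $\C^-$ comes for free; if you insist on keeping your $Y$, you would need an independent proof that $Y\in\C^-$, which I do not see.
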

\begin{proof}
By duality, it suffices to show that $\Ucal\subseteq\Scal\ast\Tcal$ implies left integrality. 

Let $b\in\Hcal(B,D)$ and $c\in\Hcal(C,D)$ be morphisms satisfying $\beta=\underline{b}$ and $\delta=\underline{d}$.
Since $\delta$ is epimorphic, if we take $D\overset{m_d}{\longrightarrow}M_d$ as in Definition \ref{DefMf}, then $M_d\in\Ucal$ by Corollary \ref{CorEpi1}.
\[
\xy
(0,12)*+{S_C[-1]}="0";
(0,2)*+{C}="2";
(0,-8)*+{W_C}="4";
(14,2)*+{D}="6";
(30,-8)*+{M_d}="8";
(10,8)*+{}="10";
{\ar_{s_C} "0";"2"};
{\ar_{w_C} "2";"4"};
{\ar^{} "0";"6"};
{\ar_{d} "2";"6"};
{\ar^{m_d} "6";"8"};
{\ar@{}|\circlearrowright "2";"10"};
\endxy
\]
By assumption $\Ucal\subseteq\Scal\ast\Tcal$, there exists a distinguished triangle
\[ S_0\overset{s_0}{\longrightarrow}M_d\overset{t_0}{\longrightarrow}T_0\rightarrow S_0[1] \]
with $S_0\in\Scal, T_0\in\Tcal$.

If we take a distinguished triangle
\[ S_B[-1]\overset{s_B}{\longrightarrow}B\overset{w_B}{\longrightarrow}W_B\rightarrow S_B\quad(S_B\in\Scal,W_B\in\Wcal), \]
then by $\Ext^1(S_B,T_0)=0$, $m_d\circ c\circ s_B$ factors through $s_0$. Namely, there exists $g\in\C(S_B[-1],S_0)$ which makes the following diagram commutative.
\[
\xy
(-8,9)*+{S_B[-1]}="0";
(8,9)*+{S_0}="2";
(-8,0)*+{B}="4";
(-8,-9)*+{D}="6";
(8,-9)*+{M_d}="8";
{\ar^{g} "0";"2"};
{\ar_{s_B} "0";"4"};
{\ar_{c} "4";"6"};
{\ar_{m_d} "6";"8"};
{\ar^{s_0} "2";"8"};
{\ar@{}|\circlearrowright "0";"8"};
\endxy
\]
If we complete $g$ into a distinguished triangle
\[ S_0[-1]\rightarrow X\overset{f_B}{\longrightarrow}S_B[-1]\overset{g}{\longrightarrow}S_0, \]
then $X\in\Scal[-1]\subseteq\C^-$. Moreover there exists $f_C\in\C(X,S_C[-1])$ satisfying $d\circ s_C\circ f_C=c\circ s_B\circ f_B$.
\[
\xy
(-22,6)*+{S_0[-1]}="0";
(-6,6)*+{X}="2";
(10,6)*+{S_B[-1]}="4";
(24,6)*+{S_0}="6";
(-22,-6)*+{M_d[-1]}="10";
(-6,-6)*+{S_C[-1]}="12";
(10,-6)*+{D}="14";
(24,-6)*+{M_d}="16";
{\ar^{} "0";"2"};
{\ar^{f_B} "2";"4"};
{\ar^{g} "4";"6"};
{\ar "0";"10"};
{\ar_{f_C} "2";"12"};
{\ar|*+{_{c\circ s_B}} "4";"14"};
{\ar^{s_B} "6";"16"};
{\ar_{} "10";"12"};
{\ar_{d\circ s_C} "12";"14"};
{\ar_{m_d} "14";"16"};
{\ar@{}|\circlearrowright "0";"12"};
{\ar@{}|\circlearrowright "2";"14"};
{\ar@{}|\circlearrowright "4";"16"};
\endxy
\]
Thus we have a commutative diagram
\[
\xy
(-8,7)*+{X}="0";
(8,7)*+{B}="2";
(-8,-7)*+{C}="4";
(8,-7)*+{D}="6";
(11,-8)*+{.}="7";
{\ar^{s_B\circ f_B} "0";"2"};
{\ar_{s_C\circ f_C} "0";"4"};
{\ar^{c} "2";"6"};
{\ar_{d} "4";"6"};
{\ar@{}|\circlearrowright "0";"6"};
\endxy
\]
If we complete $s_B\circ f_B$ into a distinguished triangle
\[ X\overset{s_B\circ f_B}{\longrightarrow}B\rightarrow Q\rightarrow X[1], \]
then by the octahedral axiom, we have $Q\in\Ucal$. Thus by Lemma \ref{LemPullBack}, $\alpha$ becomes epimorphic.
\[
\xy
(-22,18)*+{X}="0";
(-18,-2)*+{S_B[-1]}="2";
(-14,-18)*+{S_0}="4";
(-4,2)*+{B}="6";
(2.4,-5)*+{Q}="8";
(16.5,8.5)*+{W_B}="10";
(-8.5,-7.5)*+_{_{\circlearrowright}}="12";
(-14,5.5)*+_{_{\circlearrowright}}="14";
(4.3,0.5)*+_{_{\circlearrowright}}="14";
{\ar_{f_B} "0";"2"};
{\ar_{} "2";"4"};
{\ar^{} "0";"6"};
{\ar_{s_B} "2";"6"};
{\ar_{w_B} "6";"8"};
{\ar^{} "6";"10"};
{\ar_{} "4";"8"};
{\ar_{} "8";"10"};
\endxy
\]
\end{proof}


\begin{thebibliography}{BBD}                                                      \bibitem[AN]{AN} Abe, N.; Nakaoka, H.: \emph{General heart construction on a triangulated category (II): associated homological functor}, Appl. Categ. Structures, Online First 2010, DOI: 10.1007/s10485-010-9226-z.

\bibitem[BBD]{BBD}Be\u{\i}linson, A. A.; Bernstein, J.; Deligne, P.: \emph{Faisceaux pervers} (French) [Perverse sheaves] Analysis and topology on singular spaces, I (Luminy, 1981), 5--171, Ast\'{e}risque, \textbf{100}, Soc. Math. France, Paris, 1982.  

\bibitem[BR]{BR}Beligiannis, A.; Reiten, I.: \emph{Homological and homotopical aspects of torsion theories} (English summary), Mem. Amer. Math. Soc. 188 (2007), no. 883, viii+207 pp. 

\bibitem[B]{B}Borceux, F.: \emph{Handbook of categorical algebra 1, Basic category theory}, Encyclopedia of Mathematics and its Applications, \textbf{50}. Cambridge University Press, Cambridge, 1994. xvi+345 pp.

\bibitem[BM]{BM}Buan, A. B.; Marsh, R. J.: \emph{From triangulated categories to module categories via localization II: calculus of fractions}, arXiv: 1011.4597.

\bibitem[BMR]{BMR}Buan, A. B.; Marsh, R. J.; Reiten, I.: \emph{Cluster-tilted algebras}, Trans. Amer. Math. Soc. \textbf{359} (2007), no. 1, 323--332.

\bibitem[IY]{IY}Iyama, O.; Yoshino, Y.: \emph{Mutation in triangulated categories and rigid Cohen-Macaulay modules} (English summary), Invent. Math. \textbf{172} (2008), no. 1, 117--168. 

\bibitem[KR]{KR} Keller, B.; Reiten, I.: \emph{Cluster-tilted algebras are Gorenstein and stably Calabi-Yau}, Adv. Math. \textbf{211} (2007), no. 1, 123--151.

\bibitem[KZ]{KZ}Koenig, S.; Zhu, B.: \emph{From triangulated categories to abelian categories: cluster tilting in a general framework}, Math. Z. \textbf{258} (2008), no. 1, 143--160.

\bibitem[N]{N} Nakaoka, H.: \emph{General heart construction on a triangulated category (I)}: unifying $t$-structures and cluster tilting subcategories, Appl. Categ. Structures, Online First 2010, DOI: 10.1007/s10485-010-9223-2.

\bibitem [R]{R}Rump, W.: \emph{Almost abelian categories}, Cahiers Topologie G\'{e}om. Diff\'{e}rentielle. Cat\'{e}g. \textbf{42} (2003), no. 3, 163--225.


\bibitem[ZZ]{ZZ}Zhou, Y.; Zhu, B.: \emph{Mutation of torsion pairs in triangulated categories and its geometric realization}, arXiv: 1105.3521.


\end{thebibliography}
\end{document}